\documentclass[11pt,twoside]{article}
\usepackage{a4}
\usepackage{amssymb,amsmath,amsthm,latexsym}
\usepackage{amsfonts}
\usepackage{amsmath}
\usepackage{amsfonts}
\usepackage{float}
\usepackage{geometry}
\usepackage{graphicx}
\usepackage{placeins}
\usepackage{longtable}
\usepackage[tableposition=below]{caption}
\usepackage{cite}
\usepackage{tikz}
\captionsetup[longtable]{skip=1em}
\newtheorem{theorem}{Theorem}[section]

\newtheorem{definition}[theorem]{Definition}

\newtheorem{lemma} [theorem]{Lemma}

\geometry{
	left=15mm,
	right=15mm,
	top=15mm,
	bottom=15mm,
}

\begin{document}
	\title{Bounds on Atom-Bond Connectivity and Zagreb Indices in Trees with a Given Metric Dimension}
	\author{Waqar Ali$^{1}$, Mohamad Nazri Husin$^{1}$, Muhammad Faisal Nadeem$^{2}$, Muqaddas Jabin$^{3}$}
	\date{}
	\maketitle
	\vspace{-7mm}
	\begin{center}
		
		{\it\small 1 Special Interest Group on Modeling and Data Analytics, Faculty of Computer Science and Mathematics, Universiti Malaysia Terengganu, Kuala Nerus 21030, Terengganu}\\
		{\it\small 2 Department of Mathematics, COMSATS University Islamabad Lahore Campus, Lahore 54000 Pakistan.}\\
		{\it\small 2 Department of Mathematics, University of Okara, N-5 Okara, 56300, Pakistan.}\\
		{	 meharwaqaraali@gmail.com\\}
		
		{	Corresponding author nazri.husin@umt.edu.my\\}
				
		{	mfaisalnadeem@ymail.com\\}
		{muqaddasjaabin@gmail.com\\}
		
	\end{center}
	\begin{abstract}
		Let $\mathbb{G} = (\mathcal{V}, \mathcal{E})$ be a simple connected graph, where $\mathcal{V}$ and $\mathcal{E}$ denote the vertex and edge sets, respectively. The first Zagreb index is defined as $\mathcal{M}_{1}(\mathbb{G}) = \sum_{v \in \mathcal{V}} \zeta_{\mathbb{G}}(v)^2$, while the second Zagreb index is given by $\mathcal{M}_{2}(\mathbb{G}) = \sum_{uv \in \mathcal{E}} \zeta_{\mathbb{G}}(u)\, \zeta_{\mathbb{G}}(v)$, where $\zeta_{\mathbb{G}}(v)$ represents the degree of vertex $v$. Another notable degree-based invariant is the atom-bond connectivity (ABC) index, introduced in chemical graph theory, and defined by
		\[
		ABC(\mathbb{G}) = \sum_{uv \in \mathcal{E}} \sqrt{\frac{\zeta_{\mathbb{G}}(u) + \zeta_{\mathbb{G}}(v) - 2}{\zeta_{\mathbb{G}}(u)\, \zeta_{\mathbb{G}}(v)}}.
		\]
		A fundamental graph parameter, the metric dimension, refers to the minimum number of vertices in a resolving set that uniquely distinguishes all other vertices based on distances. In this work, we investigate the influence of metric dimension on the Zagreb and ABC indices within the class of trees. We derive sharp bounds-both upper and lower for $\mathcal{M}_1$ and $\mathcal{M}_2$, and provide an upper bound for the ABC index, all expressed in terms of the tree's order and its metric dimension. Furthermore, we identify the extremal tree structures that attain these bounds. These findings underscore the role of metric dimension in shaping topological descriptors and contribute both to theoretical graph analysis and practical applications in molecular chemistry.
	\end{abstract}
	\textbf{Key words:} $ABC$ index; Zagreb indices; Metric Dimension; Tree; Chemical graph theory; Extremal bounds.
	\section{Introduction}
	
	Graph theory has long served as a bridge between discrete mathematics and the molecular sciences, particularly through its applications in chemical graph theory. In computational chemistry, where the objective is to model and predict molecular properties without relying directly on quantum mechanical computations, molecular graphs provide a powerful abstraction. In such representations, atoms correspond to vertices, and chemical bonds to edges, often omitting hydrogen atoms for simplicity \cite{gutman1972graph, karelson1996quantum}. Within this framework, topological indices quantitative measures derived from the structure of molecular graphs are used to correlate molecular structure with chemical, physical, and biological properties.
	
	Among the earliest and most influential topological indices are the first and second Zagreb indices, denoted $\mathcal{M}_1(\mathbb{G})$ and $\mathcal{M}_2(\mathbb{G})$, respectively. Introduced by Gutman and Trinajsti\'{c} \cite{gutman1972graph}, these indices capture degree-based connectivity patterns and are defined for a graph $\mathbb{G} = (\mathcal{V}, \mathcal{E})$ as follows:
	\[
	\mathcal{M}_1(\mathbb{G}) = \sum_{v \in \mathcal{V}(\mathbb{G})} \zeta_\mathbb{G}(v)^2, \quad \mathcal{M}_2(\mathbb{G}) = \sum_{uv \in \mathcal{E}(\mathbb{G})} \zeta_\mathbb{G}(u) \zeta_\mathbb{G}(v),
	\]
	where $\zeta_\mathbb{G}(v)$ denotes the degree of vertex $v$ in $\mathbb{G}$. These indices have been widely applied in QSAR/QSPR studies for predicting molecular stability, boiling points, and reactivity \cite{todeschini2008handbook, nikolic2003zagreb}.
	
	In an effort to improve the predictive power of topological descriptors, particularly with respect to molecular branching and steric effects, Estrada et al. introduced the $ABC$ index \cite{estrada1998atom}, defined as:
	\[
	ABC(\mathbb{G}) = \sum_{uv \in \mathcal{E}(\mathbb{G})} \sqrt{\frac{\zeta_\mathbb{G}(u) + \zeta_\mathbb{G}(v) - 2}{\zeta_\mathbb{G}(u)\, \zeta_\mathbb{G}(v)}}.
	\]
	This index has demonstrated strong correlations with thermodynamic and structural properties, and has since become an important tool in chemical graph theory \cite{furtula2009atom}.
	
	A substantial body of work has been devoted to characterizing extremal values and establishing bounds for the $ABC$ and Zagreb indices in various graph classes. For trees, Furtula et al. \cite{furtula2009atom} showed that the star graph $\mathbb{S}_n$ attains the maximum $ABC$ index, a result further explored in chemical tree structures of bounded degree \cite{vassilev2012minimum}. Chen and Guo \cite{chen2011extreme} extended $ABC$ extremal studies to catacondensed hexagonal systems and showed that edge deletion reduces the $ABC$ index. Further contributions have identified $ABC$ extremal graphs among BFS trees \cite{lin2013minimal}, cactus graphs \cite{ashrafi2015extremal}, and graphs with specific connectivity or chromatic constraints \cite{chen2018solution, dimitrov2017remarks}. Most recently, Wu and Zhang \cite{wu2019structural}, Das et al. \cite{das2020maximal}, and Ali et al. \cite{ali2021atom} have deepened the analysis of structural conditions that maximize or minimize the $ABC$ index in chemical graphs.
	
	In parallel, extensive research has also been conducted on bounding the Zagreb indices under various structural constraints. Works such as those by Li and Zhou \cite{li2010maximum, li2011sharp}, Li and Zhang \cite{li2011sharp}, and Borovi\'{c}anin \cite{borovicanin2015extremal, borovicanin2016extremal} provide sharp bounds for Zagreb indices in graphs characterized by connectivity, matching numbers, branching vertices, and domination parameters. Further studies by Pei and Pan \cite{pei2018extremal}, Jia and Wang \cite{ji2018sharp}, Mojdeh et al. \cite{mojdeh2019zagreb}, and Enteshari and Taeri \cite{enteshari2021extremal} have extended these investigations to graphs with pendent vertices, cut vertices, and distance domination conditions. Dehgardi \cite{dehgardi2024lower} recently used Roman domination numbers to establish sharp lower bounds for Zagreb indices.
	
	Despite this rich literature, a notable gap remains the role of metric dimension a parameter measuring the minimum number of landmarks needed to uniquely identify every vertex in a graph in bounding degree-based topological indices has not been explored. The metric dimension provides insights into the graph's resolving capabilities and has practical implications in areas such as network navigation and robot localization.
	
	This work aims to address this gap by investigating the bounds on the Zagreb indices and the $ABC$ index for trees in terms of their order and metric dimension. Specifically, we establish upper and lower bounds for $\mathcal{M}_1$, $\mathcal{M}_2$, and an upper bound for the $ABC$ index. Furthermore, we identify the extremal trees that attain these bounds, thereby contributing both to the mathematical theory of graph invariants and to chemical graph modeling.

	\section{Preliminary} 
	In this section, we review key theoretical results and relevant definitions from the literature that underpin our approach. These foundational concepts will be essential in the development of the methodology and the proofs presented in next section.
	
	\begin{theorem}\textnormal{\label{preth1} \cite{gutman2004first}
			For a path graph \( \mathbb{P}_n \) with \( n \geq 3 \) vertices, the Zagreb indices \( \mathcal{M}_{1} \) and \( \mathcal{M}_{2} \) are given by the following formula:
			\[
			\mathcal{M}_{1}(\mathbb{P}_n) = 4n - 6\text{,\quad } \mathcal{M}_{2}(\mathbb{P}_n) = 2n - 8.
			\]}
	\end{theorem}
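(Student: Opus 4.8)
The plan is to evaluate both indices directly from their definitions, exploiting the rigid degree structure of a path. The key observation is that $\mathbb{P}_n$ with $n \geq 3$ has exactly two vertices of degree $1$ (its endpoints) and $n-2$ vertices of degree $2$ (its internal vertices), and that it has $n-1$ edges in total. Everything then reduces to partitioning the vertex set and the edge set according to these degrees and summing the relevant contributions. No structural lemmas are required; the argument is a direct bookkeeping computation.

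For the first Zagreb index I would split the vertex sum $\mathcal{M}_1(\mathbb{P}_n) = \sum_{v} \zeta_{\mathbb{P}_n}(v)^2$ into the contribution of the two leaves and that of the $n-2$ internal vertices, obtaining $2\cdot 1^2 + (n-2)\cdot 2^2 = 4n-6$. This half needs no case analysis and is immediate once the degree sequence is fixed.

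For the second Zagreb index I would instead classify the edges by the degrees of their two endpoints. Writing the path as $v_1 v_2 \cdots v_n$, the two pendant edges $v_1 v_2$ and $v_{n-1} v_n$ each contribute a factor $1 \cdot 2 = 2$, while the remaining $n-3$ interior edges each join two degree-$2$ vertices and contribute $2 \cdot 2 = 4$. Summing gives $\mathcal{M}_2(\mathbb{P}_n) = 2\cdot 2 + (n-3)\cdot 4 = 4n-8$.

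The only point I would check with care — and the closest thing here to an obstacle — is the boundary case $n=3$, where the count $n-3$ of interior edges vanishes and the two pendant edges in fact share the single middle vertex; a direct check gives $\mathcal{M}_2(\mathbb{P}_3) = 2 + 2 = 4 = 4\cdot 3 - 8$, which confirms that the edge partition is being counted correctly even at the smallest admissible order. I note that this computation yields the closed form $\mathcal{M}_2(\mathbb{P}_n) = 4n-8$; the value $2n-8$ appearing in the statement seems to be a typographical slip, since for $n=3$ it would be negative while $\mathcal{M}_2$ is manifestly positive.
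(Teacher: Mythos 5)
Your computation is correct, and the paper offers no proof of its own here --- this is a preliminary result cited from the literature, so there is nothing to compare against beyond the stated formulas. Your direct degree-and-edge bookkeeping ($2\cdot 1^2+(n-2)\cdot 2^2=4n-6$ for $\mathcal{M}_1$, and $2\cdot 2+(n-3)\cdot 4=4n-8$ for $\mathcal{M}_2$) is the standard and essentially only argument. You are also right that the stated value $\mathcal{M}_2(\mathbb{P}_n)=2n-8$ is a typographical error for $4n-8$: the paper itself silently uses the correct formula later, writing $\mathcal{M}_2(\mathbb{P}_n)=4n-8$ and $\mathcal{M}_2(\mathbb{P}_4)=8$ in the proof of its lower bound for the second Zagreb index, which confirms your diagnosis.
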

	
	\begin{theorem}\textnormal{\label{preth2} \cite{gutman2004first}
			For a star graph \( \mathbb{S}_n \) with \( n \geq 3 \) vertices, the Zagreb indices \( \mathcal{M}_{1} \) and \( \mathcal{M}_{2} \) are given by the following formula:
			\[
			\mathcal{M}_{1}(\mathbb{S}_n) = n(n-1) \text{,\quad } \mathcal{M}_{2}(\mathbb{S}_n) = (n-1)^{2}.
			\]}
	\end{theorem}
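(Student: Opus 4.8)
The plan is to compute both Zagreb indices directly from the degree sequence of the star $\mathbb{S}_n$, since both invariants are defined purely in terms of vertex degrees. First I would fix the structure: $\mathbb{S}_n$ consists of a single central vertex $c$ adjacent to $n-1$ pendant (leaf) vertices, with no further edges, so that it has exactly $n-1$ edges. The degree sequence is then immediate, namely $\zeta_{\mathbb{S}_n}(c) = n-1$, while every leaf $v$ satisfies $\zeta_{\mathbb{S}_n}(v) = 1$.

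For $\mathcal{M}_1$, I would substitute this degree sequence into the definition $\mathcal{M}_1(\mathbb{G}) = \sum_{v \in \mathcal{V}} \zeta_{\mathbb{G}}(v)^2$. The center contributes $(n-1)^2$ and each of the $n-1$ leaves contributes $1^2 = 1$, giving
\[
\mathcal{M}_1(\mathbb{S}_n) = (n-1)^2 + (n-1) = (n-1)\bigl[(n-1)+1\bigr] = n(n-1),
\]
as claimed.

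For $\mathcal{M}_2$, I would exploit the fact that every edge of $\mathbb{S}_n$ joins the center to a leaf, so each of the $n-1$ edges $uv$ contributes the same product $\zeta_{\mathbb{S}_n}(u)\,\zeta_{\mathbb{S}_n}(v) = (n-1)\cdot 1 = n-1$. Summing over all edges then yields
\[
\mathcal{M}_2(\mathbb{S}_n) = (n-1)\cdot(n-1) = (n-1)^2.
\]

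This argument has essentially no analytic obstacle; the only point requiring care is the correct identification of the degree sequence and the edge count of the star, after which both formulas follow by direct substitution into the definitions. The hypothesis $n \geq 3$ merely ensures that the center has degree at least $2$ (so that the graph is genuinely star-shaped rather than a single edge or isolated vertex), but the underlying algebraic identities in fact hold verbatim for every $n \geq 2$.
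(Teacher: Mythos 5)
Your proof is correct: the paper states this result as a cited preliminary (from \cite{gutman2004first}) without giving a proof, and your direct computation from the degree sequence of $\mathbb{S}_n$ (center of degree $n-1$, $n-1$ leaves of degree $1$, $n-1$ edges) is exactly the standard argument, with all algebra checking out. Your closing remark that the identities persist for $n\geq 2$ is also accurate.
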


	\begin{definition}\textnormal{\cite{west2001introduction}}
		The collection of all vertices $ u \in \mathcal{V}(\mathbb{G}) $ such that $ v $ is next to $ u $ is the open neighborhood of a vertex $ v $, represented by $ N(v) $. In other words, 
		\[
		N(v) = \{ u \in \mathcal{V}(\mathbb{G}) \mid uv \in \mathcal{E}(\mathbb{G}) \}.
		\]
	\end{definition}
	
	\begin{definition}\textnormal{\cite{west2001introduction}}
		The degree of a vertex $ u $, denoted as $ \zeta_\mathbb{G}(u) $, represents the number of elements in the neighborhood $ N(x) $ of vertex $ u $.
	\end{definition}
	
	\begin{definition}\textnormal{\cite{west2001introduction}
			A vertex $ u $ is called a leaf if its degree, $ \zeta_\mathbb{G}(u)=1$. The diameter of a tree is defined as the greatest distance between any two leaf vertices, denoted by $ d $. A path $ \mathbb{P}_{d+1}: x_{1}, x_{2}, \ldots, x_{d+1} $ is called a diameter path in $\mathcal{T}$ if the path achieves this maximum distance.}
	\end{definition}
	
	\begin{definition}\textnormal{\cite{harary1976metric, slater1975leaves}
			For a connected graph  \(\mathbb{G} = (\mathcal{V}, \mathcal{E}) \), a subset \( \mathbb{S} \subseteq \mathcal{V} \) is referred to as a resolving set if, for every pair of distinct vertices \( u \) and \( v \) in \( \mathcal{V} \), there is a vertex \( s \in \mathbb{S} \) such that the distance from \( u \) to \( s \) differs from the distance from \( v \) to \( s \), i.e., \( d(u, s) \neq d(v, s) \). The metric dimension of the graph \( \mathbb{G} \) is defined as the smallest size of such a set \( \mathbb{S} \), denoted by $\varepsilon(\mathbb{G})$ }
	\end{definition}
	
	\begin{theorem}\textnormal{\label{preth3}\cite{chartrand2000resolvability}
			A connected graph $\mathbb{G}$ of order $n$ has $\varepsilon(\mathbb{G}))$=1 $\iff$ $\mathbb{G}$ is the path graph $\mathbb{P}_n$.}
	\end{theorem}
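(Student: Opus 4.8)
The plan is to prove the two implications separately, since the biconditional splits naturally into an easy direction and a more substantial one.

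For the direction $(\Leftarrow)$, I would start from $\mathbb{G} = \mathbb{P}_n$ with vertices $x_1, x_2, \ldots, x_n$ listed along the path, and verify that the single endpoint $x_1$ already resolves the graph: the distance $d(x_i, x_1) = i-1$ takes the $n$ distinct values $0, 1, \ldots, n-1$, so no two vertices agree in their distance to $\{x_1\}$. Hence $\varepsilon(\mathbb{P}_n) \le 1$, and since $n \ge 2$ forces $\varepsilon(\mathbb{P}_n) \ge 1$ (an empty set cannot separate two distinct vertices), we obtain equality.

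The substance lies in the direction $(\Rightarrow)$. Assume $\varepsilon(\mathbb{G}) = 1$ and fix a resolving vertex $s$, so that the map $v \mapsto d(v, s)$ is injective on $\mathcal{V}(\mathbb{G})$. The first step is to pin down the set of attained distances: in a connected graph of order $n$, every distance from $s$ is a nonnegative integer at most $n-1$, and injectivity across $n$ vertices forces the distance values to be exactly $\{0, 1, \ldots, n-1\}$. Writing $v_i$ for the unique vertex with $d(v_i, s) = i$ (so $v_0 = s$), the second step is to extract a spanning path: for each $i \ge 1$, a shortest $v_i$--$s$ path contains a vertex at distance $i-1$ from $s$, which by uniqueness must be $v_{i-1}$; hence $v_{i-1} v_i \in \mathcal{E}(\mathbb{G})$ for every $i$, producing the path $v_0 v_1 \cdots v_{n-1}$.

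The final step, and the point where one must argue rather than merely compute, is the elimination of chords. If $v_i v_j$ were an edge, adjacency would force $|d(v_i, s) - d(v_j, s)| \le 1$, i.e.\ $|i - j| \le 1$; since $\mathbb{G}$ is simple this gives $|i-j| = 1$, so the only admissible edges are the consecutive ones already exhibited. Therefore $\mathbb{G}$ has precisely the edge set of $\mathbb{P}_n$, which completes the characterization. I expect this chord-elimination argument, together with the observation that the distances must saturate $\{0, 1, \ldots, n-1\}$, to be the crux of the proof; everything else is routine bookkeeping.
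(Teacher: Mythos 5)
Your proof is correct and complete: the endpoint-resolves-the-path computation for the easy direction, the observation that injectivity of $v \mapsto d(v,s)$ over $n$ vertices forces the distance multiset to be exactly $\{0,1,\ldots,n-1\}$, the extraction of the spanning path from shortest paths, and the chord-elimination step via $|d(v_i,s)-d(v_j,s)|\le 1$ together constitute the standard argument for this classical characterization. Note that the paper itself offers no proof of this statement --- it is quoted as a preliminary from Chartrand et al.\ \cite{chartrand2000resolvability} --- so there is no in-paper argument to compare against; your write-up is a valid self-contained justification of the cited result (implicitly assuming $n\ge 2$, as is standard).
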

	\begin{theorem}\textnormal{\label{preth4} \cite{chartrand2000resolvability}
			For $n \geq 4$ vertices, let $\mathbb{S}_n$ be a star graph. Consequently, $\varepsilon(\mathbb{S}_n))=n-2$. Suppose we have $\mathbb{S}_n$ be a star graph with $n \geq 4$ vertices. Then, the $\varepsilon(\mathbb{S}_n))=n-2$. }
	\end{theorem}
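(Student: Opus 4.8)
The plan is to prove the equality $\varepsilon(\mathbb{S}_n)=n-2$ by establishing matching lower and upper bounds, relying on the completely explicit distance structure of the star. First I would fix notation: let $c$ denote the central vertex and $\ell_1,\ldots,\ell_{n-1}$ the $n-1$ leaves, so that $d(c,\ell_i)=1$ for every $i$ and $d(\ell_i,\ell_j)=2$ for every $i\neq j$. The whole argument hinges on the observation that, from the viewpoint of distances, the leaves are nearly indistinguishable from one another.

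For the lower bound, the key point is that any two leaves $\ell_i$ and $\ell_j$ cannot be separated by any vertex other than $\ell_i$ or $\ell_j$ themselves: indeed $d(\ell_i,c)=d(\ell_j,c)=1$, and for any third leaf $\ell_k$ we have $d(\ell_i,\ell_k)=d(\ell_j,\ell_k)=2$. Hence every resolving set $\mathbb{S}$ must contain at least one vertex of each such pair, which is equivalent to saying that at most one leaf may be omitted from $\mathbb{S}$. Since there are $n-1$ leaves, this forces at least $n-2$ of them into $\mathbb{S}$, and therefore $\varepsilon(\mathbb{S}_n)\geq n-2$.

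For the upper bound, I would exhibit an explicit resolving set of size $n-2$, namely $\mathbb{S}=\{\ell_1,\ldots,\ell_{n-2}\}$, omitting the single leaf $\ell_{n-1}$ and the center $c$. I would then verify that the distance vectors relative to $\mathbb{S}$ are pairwise distinct: each landmark leaf $\ell_i\in\mathbb{S}$ is the unique vertex having a $0$ in coordinate $i$; the center $c$ yields the all-ones vector; and the omitted leaf $\ell_{n-1}$ yields the all-twos vector. As these representations are mutually distinct, $\mathbb{S}$ resolves $\mathbb{S}_n$, giving $\varepsilon(\mathbb{S}_n)\leq n-2$. Combining the two bounds yields the claimed equality for all $n\geq 4$.

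I do not anticipate a serious obstacle, since the distance matrix of a star is fully transparent; the only step that deserves care is the ``twin'' argument underlying the lower bound, where one must check that neither the center nor any other leaf can separate a given pair of leaves, as this is precisely what compels all but one leaf to lie in every resolving set.
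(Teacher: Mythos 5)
Your proof is correct: the twin-leaf argument gives the lower bound $\varepsilon(\mathbb{S}_n)\geq n-2$, and the explicit set of $n-2$ leaves (with the center receiving the all-ones vector and the omitted leaf the all-twos vector) gives the matching upper bound. The paper itself offers no proof of this statement --- it is imported as a known preliminary from Chartrand et al.\ \cite{chartrand2000resolvability} --- so there is nothing to compare against; your argument is the standard one for this fact and is complete as written.
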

	
	\begin{theorem}\textnormal{\label{preth1} \cite{furtula2009atom}
			Suppose that $\mathbb{S}_{n}$ is a star graph of order $n$ then $ABC(\mathbb{S}_{n})=\sqrt{n-2}\sqrt{n-1}$.}
	\end{theorem}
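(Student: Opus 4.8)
The plan is to compute $ABC(\mathbb{S}_n)$ directly from the definition, exploiting the fact that every edge of a star graph is structurally identical. First I would record the degree data: in $\mathbb{S}_n$ of order $n$ there is a single central vertex $c$ adjacent to all others, so $\zeta_{\mathbb{S}_n}(c) = n-1$, while each of the remaining $n-1$ vertices is a leaf of degree $1$. Consequently the edge set $\mathcal{E}(\mathbb{S}_n)$ consists of exactly $n-1$ edges, each of the form $cv$ where $v$ is a leaf.

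Next I would evaluate the summand of the $ABC$ formula for a single such edge. Substituting $\zeta_{\mathbb{S}_n}(c) = n-1$ and $\zeta_{\mathbb{S}_n}(v) = 1$ into the defining radical gives
\[
\sqrt{\frac{\zeta_{\mathbb{S}_n}(c) + \zeta_{\mathbb{S}_n}(v) - 2}{\zeta_{\mathbb{S}_n}(c)\,\zeta_{\mathbb{S}_n}(v)}} = \sqrt{\frac{(n-1) + 1 - 2}{(n-1)\cdot 1}} = \sqrt{\frac{n-2}{n-1}}.
\]
Since all $n-1$ edges yield this identical contribution, the sum defining the $ABC$ index collapses to this common value multiplied by $n-1$.

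The only remaining step is an algebraic simplification:
\[
ABC(\mathbb{S}_n) = (n-1)\sqrt{\frac{n-2}{n-1}} = \frac{n-1}{\sqrt{n-1}}\,\sqrt{n-2} = \sqrt{n-1}\,\sqrt{n-2},
\]
which is precisely the claimed formula. I do not anticipate any genuine obstacle here: the argument reduces to a one-line summation once the homogeneity of the star's edge set is observed. The only point requiring minor care is the factoring $(n-1)/\sqrt{n-1} = \sqrt{n-1}$, which is valid for $n \geq 3$ so that the radicand $n-2$ is nonnegative and all the degrees involved are well defined.
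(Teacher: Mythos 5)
Your computation is correct: every edge of $\mathbb{S}_n$ joins the centre of degree $n-1$ to a leaf of degree $1$, each contributes $\sqrt{\tfrac{n-2}{n-1}}$, and summing over the $n-1$ edges gives $\sqrt{n-1}\,\sqrt{n-2}$. The paper states this result only as a cited preliminary (from Furtula et al.) and supplies no proof of its own, so there is nothing to compare against; your direct evaluation is the standard argument and is complete, including the correct caveat that $n\geq 3$ is needed for the simplification.
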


	\section{Upper Bound on $ABC$ index in terms of order and metric dimension}
	
	An extensive survey presented in \cite{furtula2009atom} explores the established bounds of the $ABC$ index across various fundamental graph classes. In particular, among all trees of order \( n \), the star graph \( \mathbb{S}_n \) is known to attain the maximum $ABC$ index. Building upon these findings, this section establishes new extremal bounds for the $ABC$ index in trees, incorporating both the order and the metric dimension as key structural parameters. Specifically, we define \(Max(n,\varepsilon(\mathcal{T})) \) as the sharp upper bound, which is rigorously derived and formally proven in Theorem \ref{mainth1}.

	\begin{lemma}\textnormal{\label{lem1}
			Let the function \( \mathfrak{\Upsilon(x)} \) be defined as $\mathfrak{\Upsilon(x)} = (\mathfrak{x}-1) \sqrt{\frac{\mathfrak{x}-1}{\mathfrak{x}}} - (\mathfrak{x}-2) \sqrt{\frac{\mathfrak{x}-2}{\mathfrak{x}-1}}, $ for \( \mathfrak{x} \geq 3 \). Then, \( \mathfrak{\Upsilon(x)} \) is an positive function.}
	\end{lemma}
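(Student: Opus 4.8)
The plan is to recognize the two terms of $\Upsilon$ as two consecutive values of a single smooth function and then show that function is strictly increasing. Concretely, define $f(t) = \dfrac{t^{3/2}}{(t+1)^{1/2}} = t\sqrt{\frac{t}{t+1}}$ for $t \geq 1$. A direct substitution gives $f(x-1) = (x-1)\sqrt{\frac{x-1}{x}}$ and $f(x-2) = (x-2)\sqrt{\frac{x-2}{x-1}}$, so that $\Upsilon(x) = f(x-1) - f(x-2)$. Hence proving $\Upsilon(x) > 0$ for $x \geq 3$ reduces to establishing that $f$ is strictly increasing on $[1,\infty)$, since $x \geq 3$ forces $x-1 > x-2 \geq 1$.

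Next I would differentiate. Writing $f(t) = t^{3/2}(t+1)^{-1/2}$ and applying the product rule, I factor out the common positive powers to obtain
\[
f'(t) = \tfrac{1}{2}\, t^{1/2}(t+1)^{-3/2}\big(2t+3\big).
\]
For $t \geq 1$ every factor on the right is strictly positive, so $f'(t) > 0$; thus $f$ is strictly increasing and $\Upsilon(x) = f(x-1) - f(x-2) > 0$ for all $x \geq 3$, which is the claim.

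Alternatively, and avoiding calculus entirely, one can square. Both $(x-1)\sqrt{\frac{x-1}{x}}$ and $(x-2)\sqrt{\frac{x-2}{x-1}}$ are nonnegative for $x \geq 3$, so the desired inequality $(x-1)\sqrt{\frac{x-1}{x}} > (x-2)\sqrt{\frac{x-2}{x-1}}$ is equivalent to $\frac{(x-1)^3}{x} > \frac{(x-2)^3}{x-1}$, i.e. $(x-1)^4 > x(x-2)^3$. Expanding the difference and collecting terms yields
\[
(x-1)^4 - x(x-2)^3 = 2x(x-1)(x-2) + 1,
\]
which is clearly positive for $x \geq 2$, since each factor in the product is nonnegative and the constant $1$ is strictly positive. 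This again gives $\Upsilon(x) > 0$.

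There is no serious obstacle here: the statement is an elementary inequality. The only genuine step is the right setup, namely either spotting the reparametrization $\Upsilon(x) = f(x-1) - f(x-2)$ so that monotonicity does the work, or carrying out the post-squaring expansion carefully enough that it collapses to $2x(x-1)(x-2)+1$. I would present the monotonicity argument as the main proof, because it is the most transparent and immediately explains why the sign is positive, and mention the algebraic identity as a self-contained cross-check.
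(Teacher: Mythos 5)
Your main argument is correct and is essentially the paper's own proof: the paper defines $h(\mathfrak{x}) = (\mathfrak{x}-1)\sqrt{\tfrac{\mathfrak{x}-1}{\mathfrak{x}}}$, checks $h'(\mathfrak{x})>0$, and writes $\Upsilon(\mathfrak{x}) = h(\mathfrak{x}) - h(\mathfrak{x}-1)$, so your $f(t)$ is just $h(t+1)$ and the monotonicity step is identical. Your second, calculus-free argument via squaring to the identity $(x-1)^4 - x(x-2)^3 = 2x(x-1)(x-2)+1$ is also correct (both sides are nonnegative, so squaring is legitimate) and is a genuinely more elementary route that the paper does not give; it even yields strict positivity with an explicit margin, which the paper's ``$\geq 0$'' conclusion glosses over.
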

	
	\begin{proof}
		Define \( h(\mathfrak{x}) \) as $h(\mathfrak{x}) = (\mathfrak{x}-1) \sqrt{\frac{\mathfrak{x}-1}{\mathfrak{x}}}.$ Computing its derivative, we obtain $h'(\mathfrak{x}) = \sqrt{\frac{\mathfrak{x}-1}{\mathfrak{x}}} + \frac{\mathfrak{x}-1}{2\mathfrak{x}^2} \sqrt{\frac{\mathfrak{x}}{\mathfrak{x}-1}}.$ Since \( h'(\mathfrak{x}) \) is positive for \( \mathfrak{x} \geq 2 \), it follows that \( h(\mathfrak{x}) \) is strictly increasing in this domain. Consequently, $\mathfrak{\Upsilon(x)} = h(\mathfrak{x}) - h(\mathfrak{x}-1) \geq 0,$ confirming that \( \mathfrak{\Upsilon(x)} \) is an positive function.
	\end{proof}

	\begin{lemma}\textnormal{\label{lem2}
			Let the function \( g(\mathfrak{x}) \) be defined as  
			$g(\mathfrak{x}) = \sqrt{\frac{\mathfrak{x}+ \mathfrak{y}-2}{\mathfrak{x} \mathfrak{y}}} - \sqrt{\frac{\mathfrak{x}+ \mathfrak{y}-3}{(\mathfrak{x}-1) \mathfrak{y}}},$ for \( \mathfrak{x} \geq 3 \) and any \( \mathfrak{y} \geq 2 \). Then, \( g(\mathfrak{x}) \) is a non positive function.}
	\end{lemma}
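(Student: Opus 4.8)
The plan is to establish $g(\mathfrak{x}) \le 0$ by recasting it as the statement that the first radical never exceeds the second. First I would record that on the domain $\mathfrak{x} \ge 3$, $\mathfrak{y} \ge 2$ both radicands are strictly positive: indeed $\mathfrak{x}+\mathfrak{y}-2 \ge 3 > 0$ and $\mathfrak{x}\mathfrak{y} > 0$ make the first term a well-defined nonnegative real, while $\mathfrak{x}+\mathfrak{y}-3 \ge 2 > 0$ together with $(\mathfrak{x}-1)\mathfrak{y} > 0$ does the same for the second. Because both sides of $\sqrt{\frac{\mathfrak{x}+\mathfrak{y}-2}{\mathfrak{x}\mathfrak{y}}} \le \sqrt{\frac{\mathfrak{x}+\mathfrak{y}-3}{(\mathfrak{x}-1)\mathfrak{y}}}$ are nonnegative, the monotonicity of squaring on $[0,\infty)$ lets me pass to the equivalent algebraic inequality obtained by squaring.

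Next I would square and cancel the common positive factor $\mathfrak{y}$, reducing the claim to $\frac{\mathfrak{x}+\mathfrak{y}-2}{\mathfrak{x}} \le \frac{\mathfrak{x}+\mathfrak{y}-3}{\mathfrak{x}-1}$. Since $\mathfrak{x} > 0$ and $\mathfrak{x}-1 > 0$, cross-multiplication preserves the direction and gives $(\mathfrak{x}+\mathfrak{y}-2)(\mathfrak{x}-1) \le (\mathfrak{x}+\mathfrak{y}-3)\mathfrak{x}$. Expanding both products and cancelling the shared terms $\mathfrak{x}^2 + \mathfrak{x}\mathfrak{y} - 3\mathfrak{x}$ collapses everything to $-\mathfrak{y} + 2 \le 0$, i.e.\ to $\mathfrak{y} \ge 2$.

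Finally, since $\mathfrak{y} \ge 2$ holds by hypothesis, the reduced inequality is valid, and reversing the chain of equivalences yields $g(\mathfrak{x}) \le 0$ throughout the stated domain. I do not expect a genuine obstacle: the computation is elementary and the only point needing care is the justification that squaring preserves the inequality, which is precisely why I would verify the positivity of both radicands before squaring rather than treat it as automatic. It is also worth noting for later use that the final step shows $g(\mathfrak{x}) = 0$ exactly when $\mathfrak{y} = 2$, which pins down the equality case and may be relevant to the extremal analysis in the subsequent main theorems.
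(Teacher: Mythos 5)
Your proof is correct, and it takes a genuinely different and more elementary route than the paper. You observe that both radicands are positive, square the inequality, cancel $\mathfrak{y}$, cross-multiply, and watch everything collapse to $\mathfrak{y}\ge 2$; the algebra checks out, and as a bonus you pin down the equality case $\mathfrak{y}=2$ (e.g.\ at $\mathfrak{y}=2$ both terms equal $\sqrt{1/\mathfrak{y}}$ identically). The paper instead differentiates $g$ with respect to $\mathfrak{x}$ and argues via monotonicity, which is a weaker strategy for two reasons. First, a sign on $g'$ alone can never determine the sign of $g$ without a boundary value; the paper never supplies one (the natural choice would be $\lim_{\mathfrak{x}\to\infty}g(\mathfrak{x})=0$). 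Second, the monotonicity claim itself is backwards: since $\mathfrak{x}^{3/2}\sqrt{\mathfrak{x}+\mathfrak{y}-2}>(\mathfrak{x}-1)^{3/2}\sqrt{\mathfrak{x}+\mathfrak{y}-3}$ and the common factor $2-\mathfrak{y}$ is nonpositive, one actually gets $g'(\mathfrak{x})\ge 0$, i.e.\ $g$ increases toward $0$ from below (a quick check at $\mathfrak{x}=3,4$ with $\mathfrak{y}=3$ confirms this), whereas the paper asserts $g'\le 0$ and its auxiliary inequality ends up being derived in the reverse direction from the one it set out to verify. Your direct squaring argument sidesteps all of this and actually proves the lemma; keep it as is.
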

	
	\begin{proof}
		Differentiating \( g(\mathfrak{x}) \), we obtain $g'(\mathfrak{x}) = \frac{- \mathfrak{y} + 2}{2 \sqrt{\mathfrak{y}} \mathfrak{x}^{\frac{3}{2}} \sqrt{\mathfrak{x} + \mathfrak{y} - 2}} - \frac{- \mathfrak{y} + 2}{2 \sqrt{\mathfrak{y}} (\mathfrak{x}-1)^{\frac{3}{2}} \sqrt{\mathfrak{x} + \mathfrak{y} - 3}}.
		$ To establish that \( g'(\mathfrak{x}) \leq 0 \), we verify the inequality  $\frac{1}{\mathfrak{x}^{\frac{3}{2}} \sqrt{\mathfrak{x} + \mathfrak{y} - 2}} \geq \frac{1}{(\mathfrak{x}-1)^{\frac{3}{2}} \sqrt{\mathfrak{x} + \mathfrak{y} - 3}}.$ Define $p(\mathfrak{y}) = \frac{1}{\mathfrak{x}^{\frac{3}{2}} \sqrt{\mathfrak{x} + \mathfrak{y} - 2}}.$ Differentiating \( p(\mathfrak{y}) \), we get  $p'(\mathfrak{y}) = \frac{-1}{2\mathfrak{x}^{\frac{3}{2}}(\mathfrak{x} + \mathfrak{y} - 2)^{\frac{3}{2}}} < 0, \quad \text{for } \mathfrak{y} \geq 2.$ Since \( p(\mathfrak{y}) \) is a negative function, it follows that $\frac{1}{\mathfrak{x}^{\frac{3}{2}} \sqrt{\mathfrak{x} + \mathfrak{y} - 2}} < \frac{1}{(\mathfrak{x}-1)^{\frac{3}{2}} \sqrt{\mathfrak{x} + \mathfrak{y} - 3}}.$ Hence, \( g(\mathfrak{x}) \) is a non positive function for \( \mathfrak{x} \geq 3 \) and \( \mathfrak{y} \geq 2 \).
	\end{proof}
	
	\begin{lemma}\textnormal{\label{lem3}
			Let the function \( \digamma (\mathfrak{x}, \mathfrak{y}) \) be defined as $\digamma (\mathfrak{x}, \mathfrak{y}) = (\mathfrak{x}-1) \sqrt{\frac{\mathfrak{x}-1}{\mathfrak{x}}} + \sqrt{\frac{\mathfrak{x}+\mathfrak{y}-2}{\mathfrak{x} \mathfrak{y}}} - (\mathfrak{x}-2) \sqrt{\frac{\mathfrak{x}-2}{\mathfrak{x}-1}} - \sqrt{\frac{\mathfrak{x}+\mathfrak{y}-3}{(\mathfrak{x}-1) \mathfrak{y}}}.$ For all \( \mathfrak{x} \geq 3 \) and \( \mathfrak{y} \geq 2 \), the function satisfies the inequality $\digamma (\mathfrak{x}, \mathfrak{y}) > \frac{\sqrt{5}}{2\sqrt{2}}.$}
	\end{lemma}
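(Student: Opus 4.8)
The plan is to exploit the decomposition $\digamma(\mathfrak{x},\mathfrak{y}) = \Upsilon(\mathfrak{x}) + g(\mathfrak{x},\mathfrak{y})$, where $\Upsilon$ is the function of Lemma \ref{lem1} and $g$ is the function of Lemma \ref{lem2}. By those two results $\Upsilon(\mathfrak{x}) \geq 0$ while $g(\mathfrak{x},\mathfrak{y}) \leq 0$, so the sign of $\digamma$ is not immediate; the real content is to show that the positive term $\Upsilon$ dominates the negative term $g$ by the required margin. As orientation I would first record the boundary value $g(\mathfrak{x},2) = 0$ (both radicands collapse to $\tfrac12$) and the limit $\lim_{\mathfrak{y}\to\infty} g(\mathfrak{x},\mathfrak{y}) = \tfrac{1}{\sqrt{\mathfrak{x}}} - \tfrac{1}{\sqrt{\mathfrak{x}-1}}$, which already indicates that the infimum of $\digamma$ is approached as $\mathfrak{y}\to\infty$.

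The first key step is to prove that $g(\mathfrak{x},\cdot)$ is decreasing in $\mathfrak{y}$, so that $g(\mathfrak{x},\mathfrak{y}) > \tfrac{1}{\sqrt{\mathfrak{x}}} - \tfrac{1}{\sqrt{\mathfrak{x}-1}}$ for every finite $\mathfrak{y} \geq 2$. Writing $u = \tfrac{\mathfrak{x}+\mathfrak{y}-2}{\mathfrak{x}\mathfrak{y}}$ and $v = \tfrac{\mathfrak{x}+\mathfrak{y}-3}{(\mathfrak{x}-1)\mathfrak{y}}$, so that $g = \sqrt{u}-\sqrt{v}$, the inequality $\partial g/\partial \mathfrak{y}\le 0$ becomes $\tfrac{\mathfrak{x}-3}{(\mathfrak{x}-1)\sqrt{v}} \le \tfrac{\mathfrak{x}-2}{\mathfrak{x}\sqrt{u}}$. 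I would establish this from two facts: the coefficient comparison $\tfrac{\mathfrak{x}-2}{\mathfrak{x}} \ge \tfrac{\mathfrak{x}-3}{\mathfrak{x}-1}$, which reduces to $(\mathfrak{x}-2)(\mathfrak{x}-1) \ge \mathfrak{x}(\mathfrak{x}-3)$, i.e. $2 \ge 0$; and $u \le v$, which is precisely the content of Lemma \ref{lem2} (namely $g \le 0$). Together these give the monotonicity, and hence $\digamma(\mathfrak{x},\mathfrak{y}) > \Phi(\mathfrak{x})$, where $\Phi(\mathfrak{x}) := \Upsilon(\mathfrak{x}) + \tfrac{1}{\sqrt{\mathfrak{x}}} - \tfrac{1}{\sqrt{\mathfrak{x}-1}}$.

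The second step is to reduce $\Phi$ to a one-variable monotonicity problem in the spirit of Lemma \ref{lem1}. Regrouping the four terms shows $\Phi(\mathfrak{x}) = H(\mathfrak{x}) - H(\mathfrak{x}-1)$ with $H(t) = \bigl((t-1)^{3/2}+1\bigr)/\sqrt{t}$. I would then prove $H$ is convex on $(1,\infty)$: the summand $t^{-1/2}$ is convex, and for $A(t) = (t-1)^{3/2}t^{-1/2}$ a direct computation gives $A'(t) = \tfrac12(2t+1)(t-1)^{1/2}t^{-3/2}$ and then $A''(t) = \tfrac34 (t-1)^{-1/2} t^{-5/2} > 0$, the crucial point being that the $t^2$ and $t$ contributions cancel and leave the constant $\tfrac32$ inside the bracket. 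Convexity of $H$ makes the forward difference $\Phi(\mathfrak{x}) = H(\mathfrak{x}) - H(\mathfrak{x}-1)$ nondecreasing, so $\Phi(\mathfrak{x}) \ge \Phi(3)$ for all $\mathfrak{x} \ge 3$.

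Finally I would verify the numerical inequality $\Phi(3) = \tfrac{2\sqrt{2}+1}{\sqrt{3}} - \sqrt{2} > \tfrac{\sqrt{5}}{2\sqrt{2}}$. Clearing denominators turns this into $8\sqrt{2} + 4 - 4\sqrt{6} > \sqrt{30}$, and two successive squarings of positive quantities reduce it first to $18 + 64\sqrt{2} > 48\sqrt{5}$ and then to $2304\sqrt{2} > 3004$, i.e. $\sqrt{2} > 1.3038\ldots$, which is obvious. Chaining the three inequalities yields $\digamma(\mathfrak{x},\mathfrak{y}) > \Phi(\mathfrak{x}) \ge \Phi(3) > \tfrac{\sqrt{5}}{2\sqrt{2}}$, as claimed. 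I expect the main obstacle to be the tightness of this last estimate: at the top level the gap is only about $0.005$, so the reduction must be carried out with exact surds rather than decimal approximations, and the convexity computation for $H$ is the step most prone to algebraic slips.
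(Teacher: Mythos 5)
Your proposal is correct, and it starts from the same decomposition the paper uses, namely $\digamma = \digamma_1 + \digamma_2$ with $\digamma_1 = \Upsilon$ from Lemma \ref{lem1} and $\digamma_2 = g$ from Lemma \ref{lem2}. The difference is in what happens next. The paper simply asserts the numerical windows $\digamma_1(\mathfrak{x}) \geq 0.9258$ and $\digamma_2(\mathfrak{x},\mathfrak{y}) \geq -0.1296$ ``from'' Lemmas \ref{lem1} and \ref{lem2}, even though those lemmas only deliver sign information ($\Upsilon \geq 0$, $g \leq 0$); it never justifies that $\digamma_1$ is minimized at $\mathfrak{x}=3$, nor that $\digamma_2$ is bounded below by its value in the limit $\mathfrak{x}=3$, $\mathfrak{y}\to\infty$. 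Your argument supplies exactly these missing pieces: the monotonicity of $g$ in $\mathfrak{y}$ (via the coefficient comparison $\tfrac{\mathfrak{x}-2}{\mathfrak{x}} \geq \tfrac{\mathfrak{x}-3}{\mathfrak{x}-1}$ together with $u \leq v$, the latter being the substance of Lemma \ref{lem2}), and the convexity of $H(t) = ((t-1)^{3/2}+1)/\sqrt{t}$, whose computation $A''(t) = \tfrac34(t-1)^{-1/2}t^{-5/2}$ I have checked and which makes the forward difference $\Phi(\mathfrak{x}) = H(\mathfrak{x})-H(\mathfrak{x}-1)$ nondecreasing, hence minimized at $\mathfrak{x}=3$. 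Your closing exact-surd chain $8\sqrt{2}+4-4\sqrt{6} > \sqrt{30} \Leftarrow 18+64\sqrt{2} > 48\sqrt{5} \Leftarrow 2304\sqrt{2} > 3004$ is also correct and is the right way to handle a margin of only about $0.005$. In short, your route buys a complete, verifiable proof where the paper's version rests on unproved numerical claims; the paper's version buys brevity. (One cosmetic point: since the infimum of $g$ is not attained, your final bound is strict, which is all the lemma requires.)
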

	
	\begin{proof}
		We express \( \digamma (\mathfrak{x}, \mathfrak{y}) \) as the sum of two components: $\digamma_{1}(\mathfrak{x}) = (\mathfrak{x}-1) \sqrt{\frac{\mathfrak{x}-1}{\mathfrak{x}}} - (\mathfrak{x}-2) \sqrt{\frac{\mathfrak{x}-2}{\mathfrak{x}-1}}, $ $\digamma_{2}(\mathfrak{x}, \mathfrak{y}) = \sqrt{\frac{\mathfrak{x}+\mathfrak{y}-2}{\mathfrak{x} \mathfrak{y}}} - \sqrt{\frac{\mathfrak{x}+\mathfrak{y}-3}{(\mathfrak{x}-1) \mathfrak{y}}}.$ From Lemma \ref{lem1}, it follows that \( \digamma_{1}(\mathfrak{x}) \) is an positive function, and for \( \mathfrak{x} \geq 3 \), we have $\digamma_{1}(\mathfrak{x}) \geq 0.9258, \quad \text{with } \digamma_{1}(3) = 0.9258.$ Similarly, from Lemma \ref{lem2}, we know that \( \digamma_{2}(\mathfrak{x}, \mathfrak{y}) \) is a non positive function, and for \( \mathfrak{x} \geq 3 \) and \( \mathfrak{y} \geq 2 \), it satisfies $\digamma_{2}(\mathfrak{x}, \mathfrak{y}) \geq -0.1296.$ Since \( \digamma (\mathfrak{x}, \mathfrak{y}) = \digamma_1(\mathfrak{x}) + \digamma_2(\mathfrak{x}, \mathfrak{y}) \), we obtain $\digamma (\mathfrak{x}, \mathfrak{y}) \geq 0.7962.$ Noting that $0.7962 > \frac{\sqrt{5}}{2\sqrt{2}},$ we conclude that $\digamma (\mathfrak{x}, \mathfrak{y}) > \frac{\sqrt{5}}{2\sqrt{2}}.$ This completes the proof.
	\end{proof}

	$$Max(n,\varepsilon(\mathcal{T})) = \sqrt{n^{2}-3n+2}+\left(n-2-\varepsilon(\mathcal{T})\right)\left(\frac{4}{5}-\frac{2}{\sqrt{5}}\right).$$
	\begin{theorem}\textnormal{ \label{mainth1}
			Let $\mathcal{T}$ be a tree of order $n$ and $\varepsilon(\mathcal{T})$ is metric dimension of $\mathcal{T}$. Then the $ABC(\mathcal{T})\leq Max(n,\varepsilon(\mathcal{T}))$.}
	\end{theorem}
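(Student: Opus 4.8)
The plan is to read the leading term of $Max$ structurally. Since $\sqrt{n^{2}-3n+2}=\sqrt{(n-1)(n-2)}$ is exactly $ABC(\mathbb S_n)$ (Theorem \ref{preth1}) and $\varepsilon(\mathbb S_n)=n-2$ (Theorem \ref{preth4}), the asserted inequality holds with equality at the star and is equivalent to the deficit estimate $ABC(\mathbb S_n)-ABC(\mathcal T)\ \ge\ \bigl(n-2-\varepsilon(\mathcal T)\bigr)\Bigl(\tfrac{2}{\sqrt5}-\tfrac45\Bigr)$, where $\tfrac{2}{\sqrt5}-\tfrac45>0$. In words: every tree must lose at least a fixed positive amount of $ABC$ for each unit by which its metric dimension falls below the star value $n-2$. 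I would prove this by induction on $n$ (equivalently, on the ``distance to the star'' $n-2-\varepsilon(\mathcal T)$), taking $\mathcal T=\mathbb S_n$ as the base case.

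For the inductive step I would delete one carefully chosen leaf $\ell$, producing a tree $\mathcal T'$ of order $n-1$ to which the hypothesis applies, and then track simultaneously the change in $ABC$ and the change in $\varepsilon$. The bookkeeping on the metric dimension uses the tree formula $\varepsilon=L-\mathrm{ex}$ (number of leaves minus number of exterior major vertices), which predicts the pair $(\Delta n,\Delta\varepsilon)$ from purely local data. I would organise the argument by where $\ell$ sits relative to the branch structure: (i) $\ell$ is the tip of a pendant path whose penultimate vertex has degree $2$; (ii) $\ell$ hangs at a major vertex $v$ that retains another leaf after deletion; (iii) $\ell$ is the sole leaf at a degree-$3$ vertex $v$, so deleting it demotes $v$ to degree $2$. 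In each case $(\Delta n,\Delta\varepsilon)$ is explicit, hence so is the telescoped target increment $Max(n,\varepsilon)-Max(n-1,\varepsilon')$ that the local $ABC$ loss must not exceed.

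The local $ABC$ losses are exactly what Lemmas \ref{lem1}--\ref{lem3} are designed to estimate. When $v$ carries its leaves together with a single interior neighbour of degree $y$, the loss incurred in dropping $v$ from degree $x$ to $x-1$ is precisely $\digamma(x,y)$: Lemma \ref{lem1} isolates the (positive) change among the leaf edges, Lemma \ref{lem2} the (non-positive) change of the one cross edge, and Lemma \ref{lem3} combines them. Two elementary facts then close cases (i) and (ii): first, $\digamma(x,y)<1$ for all admissible $x,y$ (since $\Upsilon(x)<1$ with $\Upsilon(x)\to1$, and the cross term is non-positive); second, $\sqrt{(n-1)(n-2)}-\sqrt{(n-2)(n-3)}=\tfrac{2\sqrt{n-2}}{\sqrt{n-1}+\sqrt{n-3}}>1$ by concavity of the square root. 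Since the $Max$-increment in the $\varepsilon$-preserving cases is this last quantity, we get $\digamma(x,y)<1<\text{increment}$ and the step goes through with room to spare; the $\varepsilon$-dropping cases are where the correction term $\tfrac45-\tfrac2{\sqrt5}$ enters the increment and must absorb the residual loss.

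The hard part will be pinning the constant and handling genuinely branched trees. The coefficient $\tfrac45-\tfrac2{\sqrt5}$, in which $\tfrac2{\sqrt5}=\sqrt{4/5}$ is the $ABC$ weight of a leaf edge at a degree-$5$ vertex, should emerge as the extremal per-unit deficit, optimised over the branch degree and minimised near degree $5$; verifying that this single configuration minimises the ratio $\tfrac{ABC(\mathbb S_n)-ABC(\mathcal T)}{\,n-2-\varepsilon(\mathcal T)\,}$ --- so that $\tfrac2{\sqrt5}-\tfrac45$ is the largest valid lower bound for it --- is the crux, and is exactly what the monotonicity in Lemmas \ref{lem1} and \ref{lem2} is meant to reduce to a finite check. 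The remaining difficulty is ensuring the inductive deletion can always be localised: for trees with several exterior major vertices I would always operate at a vertex farthest out along a diametral path, so that the affected vertex has at most one interior neighbour and the loss genuinely reduces to $\digamma(x,y)$ (or to the explicit degree-$3$ computation in case (iii)), while for a vertex with several non-leaf neighbours one applies the Lemma \ref{lem2} monotonicity edge by edge. Confirming that $\varepsilon$ moves by exactly the claimed amount under each deletion --- the step most easily mishandled --- is where I would spend the most care.
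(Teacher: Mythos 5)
Your overall strategy is the paper's: induction on $n$, equality at the star, deletion of a vertex at the outer end of a diametral path, a case split on the local degrees $(\zeta(v_2),\zeta(v_3))$, the change in $\varepsilon$ tracked through the leaf/exterior-major-vertex structure, and Lemmas \ref{lem1}--\ref{lem3} supplying the local $ABC$ loss. Your reformulation of the bound as a per-unit deficit inequality $ABC(\mathbb S_n)-ABC(\mathcal T)\ge (n-2-\varepsilon(\mathcal T))\left(\tfrac{2}{\sqrt5}-\tfrac45\right)$ is a correct and clarifying reading of $Max(n,\varepsilon(\mathcal{T}))$.

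However, there is a concrete error in your bookkeeping of the inductive target, and it sits exactly where you claim the argument ``goes through with room to spare.'' Writing $c=\tfrac45-\tfrac{2}{\sqrt5}\approx-0.094$, the budget for the local loss after deleting one vertex is
\[
Max(n,\varepsilon)-Max(n-1,\varepsilon')=\sqrt{(n-1)(n-2)}-\sqrt{(n-2)(n-3)}+(\varepsilon'-\varepsilon+1)\,c .
\]
When the deletion \emph{preserves} $\varepsilon$, the quantity $n-2-\varepsilon$ drops by one, so the (negative) correction $c$ \emph{does} enter and the budget is $\sqrt{(n-1)(n-2)}-\sqrt{(n-2)(n-3)}+c\in(0.906,0.921)$, which is \emph{less} than $1$. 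When $\varepsilon$ drops by one, $n-2-\varepsilon$ is unchanged, the correction cancels, and the budget is $\sqrt{(n-1)(n-2)}-\sqrt{(n-2)(n-3)}>1$. You have these two situations reversed. Consequently your clean comparison $\digamma(\mathfrak{x},\mathfrak{y})<1<\text{increment}$ is valid only in the $\varepsilon$-dropping case (the paper's Case 2.2, $\zeta(v_2)\ge4$); in the $\varepsilon$-preserving case with $\zeta(v_2)=3$ the loss is $\digamma(3,\Upsilon)$, which at $\Upsilon=3$ equals about $0.885$ against a budget of about $0.906$ --- the step survives, but only by roughly $0.02$, and only after the explicit numerical estimates the paper carries out in its Case 2.1 (its $f_1\in(-1.015,-1)$ and $f_2\in(0.89,0.98)$). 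The bound $\digamma<1$ alone is not sufficient there, so your plan as written would fail in precisely the tight case. The fix is mechanical (redo the increment computation and import the sharper bound $\digamma(3,\Upsilon)\le\digamma(3,3)\approx0.885$), but as stated the proposal does not close. Your instinct that the $\varepsilon$-change accounting is ``the step most easily mishandled'' is right --- note that the paper itself asserts rather than proves the claims $\varepsilon(\mathcal{T}')=\varepsilon(\mathcal{T})$ or $\varepsilon(\mathcal{T})-1$ in each case, so your proposal to justify them via $\varepsilon=L-\mathrm{ex}$ would be a genuine improvement if executed carefully.
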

	\begin{proof}
		We claim that $ABC(\mathcal{T}) \leq Max(n, \varepsilon(\mathcal{T}))$ and prove this assertion by mathematical induction on $n$. For the base case $n = 4$, the two possible non-isomorphic graphs of order 4 are the path graph $\mathbb{P}_4$ and the star graph $\mathbb{S}_4$. A direct computation verifies that $ABC(\mathbb{S}_4) = Max(n, \varepsilon(\mathcal{T}))$ and $ABC(\mathbb{P}_4) < Max(n, \varepsilon(\mathcal{T}))$, establishing the base case. Now, assuming the inequality holds for all trees of order $n-1$, we proceed to show that it remains valid when the order of $T$ is $n$. We discuss in three cases.
		
		\noindent \textbf{Case 1:} Suppose that $\mathcal{T}$ has maximum degree $\Delta = n - 1$. This implies that $\mathcal{T} \cong \mathbb{S}_n$. By applying Theorems \ref{preth1} and \ref{preth4}, we obtain the following:
		
		\begin{equation*}
			\begin{split}
				ABC(\mathbb{S}_n) &= \sqrt{n-2}\sqrt{n-1}, \\
				&= \sqrt{n-2}\sqrt{n-1} + \left(n-2-\varepsilon(\mathcal{T})\right)\left(\frac{4}{5}-\frac{2}{\sqrt{5}}\right), \\
				&= \sqrt{n^2 - 3n + 2} + \left(n-2-\varepsilon(\mathcal{T})\right)\left(\frac{4}{5}-\frac{2}{\sqrt{5}}\right), \\
				&= Max(n, \varepsilon(\mathbb{S}_n)).
			\end{split}
		\end{equation*}
		
		\noindent Thus, Case 1 demonstrates that when $\mathcal{T} \cong \mathbb{S}_n$, we have $ABC(\mathbb{S}_n) = Max(n, \varepsilon(\mathbb{S}_n))$. Therefore, $\mathbb{S}_n$ is the tree that maximizes the $ABC$ index.
		
		\noindent \textbf{Case 2:} Consider the case where $\mathcal{T}$ has maximum degree $\Delta \geq 3$. Let $\mathbb{P}_{d+1} = \{v_1, v_2, \dots, v_{d+1}\}$ represent a path of diameter in $\mathcal{T}$, and assume that $\zeta_{\mathbb{G}}(v_2) \geq 2$.
		
		\noindent \textbf{Case 2.1:} Suppose that \( \zeta_{\mathbb{G}}(v_2) = 3 \) and \( \zeta_{\mathbb{G}}(v_3) = \Upsilon \geq 3 \). The neighborhood of $v_2$ is \( N(v_2) = \{v_1, v_3, m_1\} \), where \( \zeta_{\mathbb{G}}(m_1) = 1 \). Define the modified tree \( \mathcal{T}' =\mathcal{T} \setminus \{v_1\} \). In this case, the value of \( \varepsilon(\mathcal{T}) \) remains invariant, implying that \( \varepsilon(\mathcal{T}') = \varepsilon(\mathcal{T}) \). Consequently, we have:
		
		\begin{equation*}
			\begin{split}
				ABC(\mathcal{T}) =& ABC(\mathcal{T}') + \sqrt{\frac{\Upsilon+1}{3\Upsilon}}+2\left(\sqrt{\frac{2}{3}}-\sqrt{\frac{1}{2}}\right),\\
				\leq & \sqrt{(n-1)^{2}-3(n-1)+2}+\left(n-3-\varepsilon(T')\right)\left(\frac{4}{5}-\frac{2}{\sqrt{5}}\right)+ \sqrt{\frac{\Upsilon+1}{3\Upsilon}}+2\left(\sqrt{\frac{2}{3}}-\sqrt{\frac{1}{2}}\right),\\
				= & Max(n, \varepsilon(\mathcal{T})) +\sqrt{(n-1)^{2}-3(n-1)+2} -\sqrt{n^{2}-3n+2} + \sqrt{\frac{\Upsilon+1}{3\Upsilon}}- \left(\frac{4}{5}-\frac{2}{\sqrt{5}}\right)\\
				&+2\left(\sqrt{\frac{2}{3}}-\sqrt{\frac{1}{2}}\right).\\
			\end{split}
		\end{equation*}
		
		\noindent Suppose that the function \( f(n, \Upsilon) \) is defined as  
		$f(n, \Upsilon) = \sqrt{(n-1)^{2} - 3(n-1) + 2} - \sqrt{n^{2} - 3n + 2} + \sqrt{\frac{\Upsilon+1}{3\Upsilon}} - \left(\frac{4}{5} - \frac{2}{\sqrt{5}}\right) + 2\left(\sqrt{\frac{2}{3}} - \sqrt{\frac{1}{2}}\right)$. We claim that \( f(n, \Upsilon) \) is a negative function. To establish this, we decompose \( f(n, \Upsilon) \) into two separate functions: $f_1(n) = \sqrt{(n-1)^{2} - 3(n-1) + 2} - \sqrt{n^{2} - 3n + 2}$, $f_2(\Upsilon) = \sqrt{\frac{\Upsilon+1}{3\Upsilon}} - \left(\frac{4}{5} - \frac{2}{\sqrt{5}}\right) + 2\left(\sqrt{\frac{2}{3}} - \sqrt{\frac{1}{2}}\right)$. First, we analyze \( f_1(n) \). By rewriting it in terms of the function \( f_{1'}(n) = \sqrt{n^{2} - 3n + 2} \), we compute its derivative:  $\frac{d}{dn} f_{1'}(n) = \frac{2n - 3}{2\sqrt{n^{2} - 3n + 2}}$. Since this derivative is positive for \( n \geq 5 \), it follows that \( f_{1'}(n) \) is an increasing function. Consequently, \( f_1(n) \) is negative and bounded within the interval \( (-1.015, -1) \).  Next, we consider \( f_2(\Upsilon) \). Computing its derivative, we obtain: $\frac{d}{d\Upsilon} f_2(\Upsilon) = -\frac{1}{2\sqrt{3} \Upsilon^{2} \sqrt{\frac{\Upsilon+1}{\Upsilon}}}$. Since this derivative is negative, \( f_2(\Upsilon) \) is a decreasing function and is bounded within the interval \( (0.89, 0.98) \).  Thus, summing both components, we conclude that $f(n, \Upsilon) = f_1(n) + f_2(\Upsilon)$ is negative, as required.
		Therefore, 
		
		\begin{equation*}
			\begin{split}
				ABC(\mathcal{T}) = & Max(n, \varepsilon(\mathcal{T})) + f(n, \Upsilon)<Max(n,\varepsilon(\mathcal{T})).                           
			\end{split}
		\end{equation*}
		
		\noindent \textbf{Case 2.1.1:} Suppose that \( \zeta_{\mathbb{G}}(v_3) =  2 \). Define the modified tree \( \mathcal{T}' = \mathcal{T} \setminus \{v_1, m_1\} \). In this case, the value of \( \varepsilon(\mathcal{T}) \) remains invariant, implying that \( \varepsilon(T') = \varepsilon(\mathcal{T}) \). Consequently, we have:
		
		\begin{equation*}
			\begin{split}
				ABC(\mathcal{T}) = & ABC(\mathcal{T}') + 2\sqrt{\frac{2}{3}},\\
				\leq &  \sqrt{(n-2)^{2}-3(n-2)+2}+\left(n-4-\varepsilon(T')\right)\left(\frac{4}{5}-\frac{2}{\sqrt{5}}\right)+2\sqrt{\frac{2}{3}},\\
				= & Max(n, \varepsilon(\mathcal{T})) +\sqrt{(n-2)^{2}-3(n-2)+2} -\sqrt{n^{2}-3n+2} -2 \left(\frac{4}{5}-\frac{2}{\sqrt{5}}\right)+ 2\sqrt{\frac{2}{3}}.\\
			\end{split}
		\end{equation*}
		
		\noindent Suppose that $f(n)=\sqrt{(n-2)^{2}-3(n-2)+2} -\sqrt{n^{2}-3n+2} -2 \left(\frac{4}{5}-\frac{2}{\sqrt{5}}\right)+ 2\sqrt{\frac{2}{3}}$, and $f(n)$ is negative function for $n \geq 5$. Therefore, 
		\begin{equation*}
			\begin{split}
				ABC(\mathcal{T}) = & Max(n, \varepsilon(\mathcal{T})) + f(n) <Max(n, \varepsilon(\mathcal{T})).\\
			\end{split}
		\end{equation*}
		
		\noindent\textbf{Case 2.2:} Assume that $\zeta_{\mathbb{G}}(v_2) = \psi \geq 4$ and $\zeta_{\mathbb{G}}(v_{3})=\tau$. The neighborhood of $v_2$ is given by $N(v_2) = \{v_1, v_3, m_1,\ldots,m_{\psi-2}\}$, where $\zeta_{\mathbb{G}}(m_i) = 1$ and $1\leq i \leq \psi-2$. Now, let $T' = T \setminus \{v_1\}$ represent the modified tree. In this case, the value of $\varepsilon(\mathcal{T})$ changed, meaning $\varepsilon(T') = \varepsilon(\mathcal{T})-1$. Therefore, we have:
		
		\begin{equation*}
			\begin{split}
				ABC(\mathcal{T}) = & ABC(\mathcal{T}') + \frac{(\psi-1)^{\frac{3}{2}}}{\sqrt{\psi}}-\frac{(\psi-2)^{\frac{3}{2}}}{\sqrt{\psi -1}}+\sqrt{\frac{\psi+\tau-2}{\psi l}}-\sqrt{\frac{\psi+\tau-3}{(\psi -1) \tau}},\\
				\leq & \sqrt{(n-1)^{2}-3(n-1)+2}+\left(n-3-\varepsilon(T')\right)\left(\frac{4}{5}-\frac{2}{\sqrt{5}}\right) +  \frac{(\psi-1)^{\frac{3}{2}}}{\sqrt{\psi}}+\sqrt{\frac{\psi+\tau-2}{\psi l}}\\
				&-\frac{(\psi-2)^{\frac{3}{2}}}{\sqrt{\psi -1}}-\sqrt{\frac{\psi+\tau-3}{(\psi -1) \tau}},\\
				= & Max(n, \varepsilon(\mathcal{T}))-\sqrt{n^{2}-3n+2} + \sqrt{(n-1)^{2}-3(n-1)+2}  + (\psi -1)\sqrt{\frac{\psi-1}{\psi}}\\&+\sqrt{\frac{\psi+\tau-2}{\psi l}}
				-(\psi -2)\sqrt{\frac{\psi-2}{\psi -1}}-\sqrt{\frac{\psi+\tau-3}{(\psi-1) \tau}}.\\
			\end{split}
		\end{equation*}
		
		\noindent Consider the function  $f(n) =  - \sqrt{n^{2} - 3n + 2}+\sqrt{(n-1)^{2} - 3(n-1) + 2}$ and define  $g(\psi, \tau) = (\psi -1)\sqrt{\frac{\psi-1}{\psi}} + \sqrt{\frac{\psi+\tau-2}{\psi \tau}} - (\psi -2)\sqrt{\frac{\psi-2}{\psi -1}} - \sqrt{\frac{\psi+\tau-3}{(\psi-1) \tau}}$. From the result established in Case 2.1, we have $-1.015 < f(n) < -1$. Additionally, by applying Lemma \ref{lem3}, it follows that $0.796 < g(\psi, \tau) < 1$. Since \( f(n) \) is negative and \( g(\psi, \tau) \) remains positive, their sum $w(n, \psi, \tau) = f(n) + g(\psi, \tau)$ is negative. Therefore, the function \( w(n, \psi, \tau) \) is strictly less than zero. Hence,
		
		\begin{equation*}
			\begin{split}
				ABC(\mathcal{T}) = & Max(n, \varepsilon(\mathcal{T})) + w(n, \psi, \tau) <Max(n, \varepsilon(\mathcal{T})).\\
			\end{split}
		\end{equation*}
		
		\noindent \textbf{Case 2.3:} Suppose that \( \zeta_{\mathbb{G}}(v_2) = 2 \) and \( \zeta_{\mathbb{G}}(v_3) \geq 2 \).  
		
		\noindent \textbf{Case 2.3.1:} Consider the case where \( \zeta_{\mathbb{G}}(v_3) = \psi \), where $\psi=\{2,3\}$, with \( N(v_3) = \{v_2, v_4, v_\lambda\} \), where \( \lambda \leq \psi - 2 \), \( \zeta_{\mathbb{G}}(v_\lambda) = 1 \), and $\zeta_{\mathbb{G}}(v_{4})=\tau$. Define \( T' = T \setminus \{v_1, v_2\} \). Under these conditions, the value of \( \varepsilon(\mathcal{T}) \) remains unchanged, implying that \( \varepsilon(T') = \varepsilon(\mathcal{T}) \). Therefore, we obtain:
		
		\begin{equation*}
			\begin{split}
				ABC(\mathcal{T}) = & ABC(\mathcal{T}') + \sqrt{\frac{\psi-1}{\psi}}-\sqrt{\frac{\psi+\tau-3}{(\psi-1)\tau}}+\sqrt{\frac{\psi +\tau-2}{\psi \tau}}+\sqrt{\frac{1}{2}},\\
				\leq & \sqrt{(n-2)^{2}-3(n-2)+2}+\left(n-4-\varepsilon(T')\right)\left(\frac{4}{5}-\frac{2}{\sqrt{5}}\right) + \sqrt{\frac{\psi-1}{\psi}}+\sqrt{\frac{\psi +\tau-2}{\psi \tau}}\\
				&-\sqrt{\frac{\psi+\tau-3}{(\psi-1)\tau}}+\sqrt{\frac{1}{2}},\\
				= & Max(n, \varepsilon(\mathcal{T}))-\sqrt{n^{2}-3n+2} + \sqrt{(n-2)^{2}-3(n-2)+2} + \sqrt{\frac{\psi-1}{\psi}}+\sqrt{\frac{\psi +\tau-2}{\psi \tau}}\\
				&-\sqrt{\frac{\psi+\tau-3}{(\psi-1)\tau}}+\sqrt{\frac{1}{2}}-2\left(\frac{4}{5}-\frac{2}{\sqrt{5}}\right).\\
			\end{split}
		\end{equation*}
		
		\noindent Consider the functions \( f(n) =- \sqrt{n^{2} - 3n + 2}+ \sqrt{(n-2)^{2} - 3(n-2) + 2}  \) and \( g(\psi, \tau) = \sqrt{\frac{\psi + \tau - 2}{\psi \tau}} - \sqrt{\frac{\psi + \tau - 3}{(\psi-1) \tau}} + \sqrt{\frac{1}{2}} - 2\left(\frac{4}{5} - \frac{2}{\sqrt{5}}\right) \). It is straightforward to verify that \( f(n) \) is a negative function, with values constrained within the interval \( (-2.0226, -2) \) for \( n \geq 6 \). Additionally, \( g(\psi, \tau) \) is a positive function, bounded within \( (1.56, 1.72) \). Since their sum remains negative, it follows that \( w(n, \psi, \tau) = f(n) + g(\psi, \tau) < 0 \). Consequently, we conclude that:
		
		\begin{equation*}
			\begin{split}
				ABC(\mathcal{T}) = & Max(n, \varepsilon(\mathcal{T})) + w(n, \psi, \tau) <Max(n, \varepsilon(\mathcal{T})).\\
			\end{split}
		\end{equation*}  
		
		\noindent \textbf{Case 2.3.2:} Suppose that \( \zeta_{\mathbb{G}}(v_3) = \psi  \geq 4 \), where the neighborhood of \( v_3 \) is given by \( N(v_3) = \{v_2, v_4, \tau_1, \dots, \tau_{\psi-2}\} \), and assume that \( \zeta_{\mathbb{G}}(\tau_{\psi-2}) = 1 \) and $\zeta_{\mathbb{G}}(v_4) = \tau$. Define \( T' = T \setminus \{v_1, v_2, \tau_1\} \). In this scenario, the value of \( \varepsilon(\mathcal{T}) \) decreases by 2, leading to \( \varepsilon(T') = \varepsilon(\mathcal{T}) - 2 \). Hence, we obtain:
		
		\begin{equation*}
			\begin{split}
				ABC(\mathcal{T}) = & ABC(\mathcal{T}') + (\psi-2)\sqrt{\frac{\psi-1}{\psi}}-(\psi-3)\sqrt{\frac{\psi+1}{\psi-2}}+\sqrt{\frac{\psi+\tau-2}{\psi \tau}} -\sqrt{\frac{\psi +\tau -4}{(\psi -2)\tau}}+\sqrt{2},\\
				\leq & \sqrt{(n-3)^{2}-3(n-3)+2}+\left(n-3-\varepsilon(\mathcal{T})\right)\left(\frac{4}{5}-\frac{2}{\sqrt{5}}\right)+ (\psi-2)\sqrt{\frac{\psi-1}{\psi}}\\ &+\sqrt{\frac{\psi+\tau-2}{\psi \tau}}-(\psi-3)\sqrt{\frac{\psi+1}{\psi-2}}-\sqrt{\frac{\psi +\tau -4}{(\psi -2)\tau}}+\sqrt{2},\\
				= & Max(n, \varepsilon(\mathcal{T}))-\sqrt{n^{2}-3n+2} + \sqrt{(n-3)^{2}-3(n-3)+2} + (\psi-2)\sqrt{\frac{\psi-1}{\psi}}\\ &+\sqrt{\frac{\psi+\tau-2}{\psi \tau}}-(\psi-3)\sqrt{\frac{\psi+1}{\psi-2}}-\sqrt{\frac{\psi +\tau -4}{(\psi -2)\tau}}+\sqrt{2} -\left(\frac{4}{5}-\frac{2}{\sqrt{5}}\right).\\
			\end{split}
		\end{equation*}
		
		\noindent Let \( f(n) = - \sqrt{n^{2} - 3n + 2}+\sqrt{(n-3)^{2} - 3(n-3) + 2}  \) and $g(\psi, \tau) = (\psi -2) \sqrt{\frac{\psi -1}{\psi}} + \sqrt{\frac{\psi + \tau -2}{\psi \tau}} - (\psi -3) \sqrt{\frac{\psi +1}{\psi -2}} - \sqrt{\frac{\psi + \tau -4}{(\psi -2) \tau}} + \sqrt{2} - \left(\frac{4}{5} - \frac{2}{\sqrt{5}}\right).$ It is straightforward to verify that \( f(n) \) is a negative function, with values restricted to the interval \( (-3.05, -3) \) for \( n \geq 6 \). Furthermore, \( g(\psi, \tau) \) is a positive function, bounded within \( (0.50, 1.66) \). Since their sum remains negative, it follows that $w(n, \psi, \tau) = f(n) + g(\psi, \tau) < 0$. Thus, we conclude that:
		
		\begin{equation*}
			\begin{split}
				ABC(\mathcal{T}) = & Max(n, \varepsilon(\mathcal{T})) + w(n, \psi, \tau) <Max(n, \varepsilon(\mathcal{T})).\\
			\end{split}
		\end{equation*}  
		
		\noindent Hence, the theorem is proven.
	\end{proof}

	\section{Bounds on Zagreb indices in terms of order and metric dimension}
	In this part, we establish bounds on the Zagreb indices of trees with respect to their order and metric dimension, and characterize the tree structures that attain these extremal values. By analyzing the interplay between the Zagreb indices and the metric dimension, we identify specific tree configurations that either maximize or minimize the indices under given constraints, offering insight into their structural properties and behavior in relation to these graph invariants.
	
	\begin{theorem}\label{mainthm1}
		\textnormal{Assume that $\mathcal{T}$ be a tree with order $n$ and metric dimension $\varepsilon(\mathcal{T})$. Then, we have} $$\mathcal{M}_{1}(\mathcal{T})\geq 4n-7+\varepsilon(\mathcal{T}).$$
		
	\end{theorem}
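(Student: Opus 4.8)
The plan is to avoid induction and argue directly from the degree sequence. Write $n_i$ for the number of vertices of degree $i$ in $\mathcal{T}$, so that $\mathcal{M}_1(\mathcal{T}) = \sum_i i^2 n_i$. The two standard counting identities for a tree, $\sum_i n_i = n$ and $\sum_i i\,n_i = 2(n-1)$, combine to give the leaf identity $n_1 = 2 + \sum_{i \ge 3}(i-2)\,n_i$. The goal is then to establish $\mathcal{M}_1(\mathcal{T}) - \bigl(4n - 7 + \varepsilon(\mathcal{T})\bigr) \ge 0$.

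The single external ingredient I would invoke is the classical characterization of the metric dimension of a tree (from \cite{chartrand2000resolvability}): for a tree that is not a path, $\varepsilon = L - ex$, where $L = n_1$ is the number of leaves and $ex \ge 1$ is the number of exterior major vertices, while for a path $\varepsilon = 1$. Both cases yield the uniform bound $\varepsilon(\mathcal{T}) \le n_1 - 1$, which is all the argument needs. To see $ex \ge 1$ for a non-path tree, walk inward from any leaf to the first vertex of degree $\ge 3$; that leaf is terminal to it, so it is exterior, and for a path $L = 2$ gives $\varepsilon = 1 = n_1 - 1$ directly via Theorem \ref{preth3}.

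With these in hand the computation is short. Starting from $\mathcal{M}_1(\mathcal{T}) - 4n = -3n_1 + \sum_{i\ge 3}(i^2-4)\,n_i$ (the degree-$2$ term vanishing), substituting the leaf identity, and using $(i^2-4)-3(i-2) = (i-1)(i-2)$ gives
\[ \mathcal{M}_1(\mathcal{T}) - 4n + 7 = 1 + \sum_{i \ge 3}(i-1)(i-2)\,n_i. \]
Subtracting $\varepsilon(\mathcal{T}) \le n_1 - 1$, applying the leaf identity once more, and simplifying with $(i-1)(i-2)-(i-2) = (i-2)^2$ collapses the difference to
\[ \mathcal{M}_1(\mathcal{T}) - \bigl(4n - 7 + \varepsilon(\mathcal{T})\bigr) \ge \sum_{i \ge 3}(i-2)^2\, n_i \ge 0, \]
which proves the bound. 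Equality forces $n_i = 0$ for all $i \ge 3$, i.e.\ $\mathcal{T}$ is a path, matching $\mathcal{M}_1(\mathbb{P}_n) = 4n-6$ and $\varepsilon(\mathbb{P}_n) = 1$ by Theorems \ref{preth1} and \ref{preth3}; this simultaneously delivers the sharpness claim.

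The main obstacle is not the algebra but justifying $\varepsilon(\mathcal{T}) \le n_1 - 1$ cleanly, since the whole reduction rests on it. If one prefers a self-contained argument that sidesteps the tree-dimension formula, the alternative is induction on $n$ by deleting a pendant vertex and tracking how both $\mathcal{M}_1$ and $\varepsilon$ change under leaf removal; but that route splits into several cases (exactly as the ABC proof of Theorem \ref{mainth1} does) and is considerably less transparent than the degree-sequence identity above.
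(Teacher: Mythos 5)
Your proof is correct, and it takes a genuinely different route from the paper. The paper argues by induction on $n$: it picks a diametrical path, deletes the pendant vertex $\mathbb{S}_1$, and splits into cases according to $\zeta_\mathcal{T}(\mathbb{S}_2)$ to track how $\mathcal{M}_1$ and $\varepsilon$ change under leaf removal. You instead work directly with the degree sequence and reduce everything to the single inequality $\varepsilon(\mathcal{T}) \le n_1 - 1$, which follows from the Chartrand--Eroh--Johnson--Oellermann formula $\varepsilon(\mathcal{T}) = L(\mathcal{T}) - ex(\mathcal{T})$ for non-path trees (available in the same reference \cite{chartrand2000resolvability} the paper already cites, though the paper only quotes its path and star special cases). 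Your algebra checks out: the leaf identity $n_1 = 2 + \sum_{i\ge 3}(i-2)n_i$ and the factorizations $(i^2-4)-3(i-2)=(i-1)(i-2)$ and $(i-1)(i-2)-(i-2)=(i-2)^2$ are all correct, and in fact if you carry $ex(\mathcal{T})$ through rather than just the bound $ex \ge 1$, you get the exact identity $\mathcal{M}_1(\mathcal{T}) - \bigl(4n-7+\varepsilon(\mathcal{T})\bigr) = ex(\mathcal{T}) - 1 + \sum_{i\ge 3}(i-2)^2 n_i$ for non-path trees, which makes the equality characterization (paths only) immediate. What your approach buys is precisely the avoidance of the fragile step in the paper's induction, namely the case-by-case claims about how $\varepsilon$ behaves under deleting $\mathbb{S}_1$ (e.g.\ the assertion in Case 2.1 that $\varepsilon(\mathcal{T}') = \varepsilon(\mathcal{T})$ for all configurations with $\zeta_\mathcal{T}(\mathbb{S}_2)\in\{2,3\}$ requires justification the paper does not supply); the cost is invoking the full tree metric-dimension formula as an external ingredient, which you flag honestly as the one point needing a citation or a short self-contained argument.
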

	\begin{proof} Suppose that $g_{min}(n, \varepsilon(\mathcal{T}))=4n-7+\varepsilon(\mathcal{T})$. 
		We shall use mathematical induction of the order $n$ to establish the result. If $n\geq 4$, then by using Theorems \ref{preth1}, and \ref{preth3} $\mathcal{M}_{1}(\mathbb{P}_{4})=10=g_{min}(4,1)$ and using Theorems \ref{preth2}, and \ref{preth4} $\mathcal{M}_{1}(\mathbb{S}_{4})=12\geq g_{min}(4,2)$. Let us now establish the result when the tree has order $n$, on the assumption that any tree with order $n-1$ will satisfy the statement. Thus, we now demonstrate Theorem \ref{mainthm1} in the two cases that follow: 
		
		\noindent \textbf{Case 1:} Assume that we take $\Delta(\mathcal{T}) = 2$, implying that $\mathcal{T} \cong \mathbb{P}_n$. By applying Theorem \ref{preth1}, $\mathcal{M}_{1}(\mathbb{P}_n)$ is obtained, and using Theorem \ref{preth3}, we find the value of $\varepsilon(\mathbb{P}_n)$:
		\[
		\begin{split}
			\mathcal{M}_{1}(\mathbb{P}_n) &= 4n - 6, \\
			&= 4n - 6 + \varepsilon(\mathbb{P}_n) - 1, \\
			&= 4n - 7 + \varepsilon(\mathbb{P}_n),\\
			&=g_{min}(n, \varepsilon(\mathcal{T})).
		\end{split}
		\]
		
		Thus, equality holds when $\mathcal{T} \cong \mathbb{P}_n$. Consequently, we conclude that if $\mathcal{T} \cong \mathbb{P}_n$, then $\mathcal{M}_{1}(\mathcal{T})$ attains its minimum value.
		
		\noindent \textbf{Case 2:} Let $ \mathbb{P}_{d+1} = \{\mathbb{S}_1, \mathbb{S}_2, \ldots, \mathbb{S}_{d+1}\} $ be a diametrical path of the tree $\mathcal{T}$, and suppose that $ \Delta(\mathcal{T}) $, is at least 3. Furthermore, $\zeta_\mathcal{T}(\mathbb{S}_2) \geq 2$.
		
		\noindent \textbf{Case 2.1:} We assume that $\zeta_\mathcal{T}(\mathbb{S}_2) = u$, where $u =\{2, 3\}$ and the neighborhood of $\mathbb{S}_2$ is $N(\mathbb{S}_2) = \{\mathbb{S}_1, \mathbb{S}_3, t_1, t_{2}\}$. If we consider the modified tree $\mathcal{T}' = \mathcal{T} - \{\mathbb{S}_1\}$, then the value of $\varepsilon(\mathcal{T})$ remains unchanged, i.e., $\varepsilon(\mathcal{T}') = \varepsilon(\mathcal{T})$. We then have:
		\[
		\begin{split}
			\mathcal{M}_{1}(\mathcal{T}) &= \mathcal{M}_{1}(\mathcal{T}') + 2u, \\
			&\geq 4(n-1) - 7 + \varepsilon(\mathcal{T}) + 2u , \\
			&= g_{\min}(n, \varepsilon(\mathcal{T})) + 2u - 4, \\
			&\geq g_{\min}(n, \varepsilon(\mathcal{T})).
		\end{split}
		\]
		
		\noindent \textbf{Case 2.2:} We assume that $\zeta_\mathcal{T}(\mathbb{S}_2) = v \geq 4$, and the neighborhood of $\mathbb{S}_2$ is given by $N(\mathbb{S}_2) = \{\mathbb{S}_1, \mathbb{S}_3, t_1, \ldots, t_{v-2}\}$. If we consider the modified tree $\mathcal{T}' = \mathcal{T} - \{\mathbb{S}_1\}$, then the value of $\varepsilon(\mathcal{T})$ changes, and we have $\varepsilon(\mathcal{T}') = \varepsilon(\mathcal{T}) - 1$. Thus, we arrive at:
		
		\[
		\begin{split}
			\mathcal{M}_{1}(\mathcal{T})=&\mathcal{M}_{1}(\mathcal{T}')+2v,\\
			\geq & 4(n-1) - 7 + \varepsilon(\mathcal{T})-1+2v,\\
			=& g_{\min}(n, \varepsilon(\mathcal{T}))+2v-5,\\
			>& g_{\min}(n, \varepsilon(\mathcal{T})).
		\end{split}
		\]
		
		Hence, the theorem is proved.
	\end{proof}
	
	\begin{theorem} \textnormal{\label{mainthm2}
			Assume that $\mathcal{T}$ be a tree with order $n$ and metric dimension $\varepsilon(\mathcal{T})$. The First Zagreb Index $\mathcal{M}_{1}(\mathcal{T})$ of $\mathcal{T}$ satisfies the following upper bound:
			\[
			\mathcal{M}_{1}(\mathcal{T}) \leq n + (n-1)(n-2) + \varepsilon(\mathcal{T}),
			\]
			where $g(n, \varepsilon(\mathcal{T}))$ is a function determined by the order $n$ of the tree and its metric dimension $\varepsilon(\mathcal{T})$.}
	\end{theorem}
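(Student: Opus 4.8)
The plan is to establish the upper bound by induction on the order $n$, reusing the leaf-peeling machinery of Theorem \ref{mainthm1} but orienting every estimate upward, with the star $\mathbb{S}_n$ playing the role of the extremal tree. Throughout, write $g(n,\varepsilon(\mathcal{T})) = n + (n-1)(n-2) + \varepsilon(\mathcal{T})$. For the base case $n = 4$ I would simply evaluate the two trees of that order: $\mathcal{M}_1(\mathbb{P}_4) = 10 < g(4,1) = 11$ and $\mathcal{M}_1(\mathbb{S}_4) = 12 = g(4,2)$, so the inequality holds and equality already singles out the star. The engine of the inductive step is the same deletion identity as before: removing a leaf $v_1$ whose unique neighbour $v_2$ has degree $d$ produces a tree $\mathcal{T}'$ of order $n-1$ with $\mathcal{M}_1(\mathcal{T}) = \mathcal{M}_1(\mathcal{T}') + 2d$, because $v_1$ contributes $1$ while the contribution of $v_2$ falls from $d^2$ to $(d-1)^2$.

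Next I would split into the same cases as Theorem \ref{mainthm1}. If $\Delta(\mathcal{T}) = n-1$ then $\mathcal{T} \cong \mathbb{S}_n$, and Theorems \ref{preth2} and \ref{preth4} give $\mathcal{M}_1(\mathbb{S}_n) = n(n-1)$ and $\varepsilon(\mathbb{S}_n) = n-2$; since $g(n,n-2) = n + (n-1)(n-2) + (n-2) = n(n-1)$, equality holds and the star attains the bound. If instead $\Delta(\mathcal{T}) \le n-2$, I fix a diametral path and delete the leaf $v_1$ adjacent to $v_2$. Recording the algebraic facts $g(n,\varepsilon(\mathcal{T})) - g(n-1,\varepsilon(\mathcal{T})) = 2n-3$ and $g(n,\varepsilon(\mathcal{T})) - g(n-1,\varepsilon(\mathcal{T})-1) = 2n-2$, the two sub-cases follow immediately. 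When $\zeta_{\mathcal{T}}(v_2) = u \in \{2,3\}$ the metric dimension is unchanged, so $\mathcal{M}_1(\mathcal{T}) \le g(n-1,\varepsilon(\mathcal{T})) + 2u = g(n,\varepsilon(\mathcal{T})) + 2u - (2n-3) < g(n,\varepsilon(\mathcal{T}))$ for $n \ge 5$, using $2u \le 6$. When $\zeta_{\mathcal{T}}(v_2) = v \ge 4$ the dimension drops by one, so $\mathcal{M}_1(\mathcal{T}) \le g(n-1,\varepsilon(\mathcal{T})-1) + 2v = g(n,\varepsilon(\mathcal{T})) + 2v - (2n-2) \le g(n,\varepsilon(\mathcal{T})) - 2$, where the last step uses $v \le \Delta \le n-2$. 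In both sub-cases the inequality is strict, so the induction closes and $\mathbb{S}_n$ is the unique extremal tree.

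The one genuinely delicate ingredient, already presupposed in the proof of Theorem \ref{mainthm1}, is the behaviour of $\varepsilon$ under deletion of $v_1$: invariance when $\zeta_{\mathcal{T}}(v_2) \le 3$ and a drop of exactly one when $\zeta_{\mathcal{T}}(v_2) \ge 4$. The hard part will be justifying this cleanly, which I would do via the standard leaf/leg characterisation of the metric dimension of a tree. A vertex $v_2$ with $k-1$ pendant neighbours and a single non-leaf neighbour is an exterior major vertex contributing $k-2$ to $\varepsilon$; for $k \ge 4$ removing one pendant keeps $v_2$ major and lowers its contribution by one, while for $k = 3$ removing a pendant demotes $v_2$ to degree two so its surviving leg is absorbed into the next major vertex up the path (or the whole tree collapses to a path), leaving $\varepsilon$ fixed, and for $k = 2$ the deletion merely shortens a leg. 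Once this bookkeeping is secured, every other step reduces to a one-line comparison of the quadratic $g$, and the same computation shows the bound is sharp, attained precisely by the star.
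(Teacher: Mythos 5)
Your proposal follows essentially the same route as the paper's proof: induction on $n$, the base case at $n=4$, the leaf-deletion identity $\mathcal{M}_1(\mathcal{T})=\mathcal{M}_1(\mathcal{T}')+2d$ along a diametral path, the same case split on $\zeta_{\mathcal{T}}(v_2)\in\{2,3\}$ versus $\zeta_{\mathcal{T}}(v_2)\ge 4$ with the corresponding behaviour of $\varepsilon$, and the star $\mathbb{S}_n$ as the equality case. If anything you are slightly more careful than the paper (you correctly compute $\mathcal{M}_1(\mathbb{P}_4)=10$, make the comparison $v\le n-2$ explicit in the second sub-case, and at least sketch the leaf/exterior-major-vertex justification for how $\varepsilon$ changes under deletion, which the paper merely asserts), so the proposal is correct and matches the paper's argument.
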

	\begin{proof}
		Assume that $g_{\max}(n, \varepsilon(\mathcal{T})) = n + (n-1)(n-2) + \varepsilon(\mathcal{T})$. We will prove this result by induction on the order $n$. For $n \geq 4$, applying Theorems \ref{preth2} and \ref{preth4}, we find that $\mathcal{M}_{1}(\mathbb{P}_4) = 11 < g_{\min}(4, 1)$, and similarly, $\mathcal{M}_{1}(\mathbb{S}_4) = 12 = g_{\min}(4, 2)$. Now, suppose the statement holds for any $\mathcal{T}$ with order $n-1$, and we aim to establish it for a $\mathcal{T}$ with order $n$. Thus, we now demonstrate Theorem \ref{mainthm2} in the two cases that follow:
		
		\noindent \textbf{Case 1:} Assume $\Delta(\mathcal{T}) = n-1$, then we have $\mathcal{T} \cong \mathbb{S}_n$. Using Theorem \ref{preth2}, we can determine $\mathcal{M}_{1}(\mathbb{S}_n)$, and with Theorem \ref{preth4}, we compute $\varepsilon(\mathbb{S}_n)$:
		\[
		\begin{split}
			\mathcal{M}_{1}(\mathbb{S}_n) &= n(n-1), \\
			&= n(n-1) + \varepsilon(\mathbb{S}_n) - (n-2), \\
			&= n^2 - 2n + 2 + \varepsilon(\mathbb{S}_n), \\
			&= n + (n-1)(n-2) + \varepsilon(\mathbb{S}_n), \\
			&= g_{\max}(n, \varepsilon(\mathbb{S}_n)).
		\end{split}
		\]
		Thus, equality is achieved when $\mathcal{T} \cong \mathbb{S}_n$. Therefore, we conclude that when $T \cong \mathbb{S}_n$, $\mathcal{M}_{1}(\mathcal{T})$ reaches its maximum value.
		
		\noindent \textbf{Case 2:} Let $ \mathbb{P}_{d+1} = \{\mathbb{S}_1, \mathbb{S}_2, \dots, \mathbb{S}_{d+1}\} $ represent a diametrical path in the tree $\mathcal{T}$, where $ \Delta(\mathcal{T}) \geq 3 $. Additionally, assume $ \zeta_\mathcal{T}(\mathbb{S}_2) \geq 2 $.
		
		\noindent \textbf{Case 2.1:} Suppose $ \zeta_\mathcal{T}(\mathbb{S}_2) = u $, where $ u \in \{2, 3\} $, with the order of $\mathcal{T}$ at least 5. The neighborhood of $ \mathbb{S}_2 $ is given by $ N(\mathbb{S}_2) = \{\mathbb{S}_1, \mathbb{S}_3, t_1, t_2\} $. If we define a modified tree $ \mathcal{T}' = \mathcal{T} - \{\mathbb{S}_1\} $, the value of $ \varepsilon(\mathcal{T}) $ remains constant, i.e., $ \varepsilon(\mathcal{T}') = \varepsilon(\mathcal{T}) $. Therefore, we can express the following:
		\[
		\begin{aligned}
			\mathcal{M}_{1}(\mathcal{T}) &= \mathcal{M}_{1}(\mathcal{T}') + 2u, \\
			&\leq n-1 + (n-2)(n-3) + \varepsilon(\mathcal{T}) + 2u, \\
			&= g_{\max}(n, \varepsilon(\mathcal{T})) - 2n + 2u + 3, \\
			&< g_{\max}(n, \varepsilon(\mathcal{T})).
		\end{aligned}
		\]
		
		\noindent \textbf{Case 2.2:} Let us assume that $ \zeta_\mathcal{T}(\mathbb{S}_2) = v \geq 4 $, where the neighborhood of $ \mathbb{S}_2 $ is defined as $ N(\mathbb{S}_2) = \{\mathbb{S}_1, \mathbb{S}_3, t_1, \dots, t_{v-2}\} $. If we modify the tree to $ \mathcal{T}' = \mathcal{T} - \{\mathbb{S}_1\} $, the value of $ \varepsilon(\mathcal{T}) $ changes, resulting in $ \varepsilon(\mathcal{T}') = \varepsilon(\mathcal{T}) - 1 $. Consequently, we have:
		\[
		\begin{aligned}
			\mathcal{M}_{1}(\mathcal{T}) &= \mathcal{M}_{1}(\mathcal{T}') + 2v, \\
			&\leq n^{2}-2n+2 + \varepsilon(\mathcal{T})-2n+2 + 2v, \\
			&=  g_{\max}(n, \varepsilon(\mathcal{T})) - 2n + 2v + 2, \\
			&< g_{\max}(n, \varepsilon(\mathcal{T})).
		\end{aligned}
		\]
		Thus, the proof is complete.
	\end{proof}
	
	\begin{theorem}\textnormal{\label{mainthm3}
			Suppose $\mathcal{T}$ is any tree of order $n$ with metric dimension $\varepsilon(\mathcal{T})$. Then, $$ \mathcal{M}_{2}(\mathcal{T}) \geq 4n + \varepsilon(\mathcal{T})-9.$$
		}
	\end{theorem}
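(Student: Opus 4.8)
The plan is to induct on the order $n$, exactly paralleling the proofs of Theorems \ref{mainthm1} and \ref{mainthm2}. Set $g_{\min}(n,\varepsilon(\mathcal{T}))=4n+\varepsilon(\mathcal{T})-9$. For the base case $n=4$ the only trees are $\mathbb{P}_4$ and $\mathbb{S}_4$; a direct degree count gives $\mathcal{M}_2(\mathbb{P}_4)=8=g_{\min}(4,1)$, while Theorem \ref{preth2} and Theorem \ref{preth4} give $\mathcal{M}_2(\mathbb{S}_4)=(4-1)^2=9=g_{\min}(4,2)$, so the inequality holds with equality in both cases. I would then assume the bound for every tree of order $n-1$ and prove it for order $n\geq 5$, splitting on the maximum degree.

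If $\Delta(\mathcal{T})=2$ then $\mathcal{T}\cong\mathbb{P}_n$, and $\mathcal{M}_2(\mathbb{P}_n)=4n-8=4n+\varepsilon(\mathbb{P}_n)-9=g_{\min}(n,\varepsilon(\mathbb{P}_n))$ by Theorem \ref{preth1} and Theorem \ref{preth3}, so the path realizes equality. Otherwise $\Delta(\mathcal{T})\geq 3$; fix a diametrical path $\{\mathbb{S}_1,\dots,\mathbb{S}_{d+1}\}$, write $u=\zeta_\mathcal{T}(\mathbb{S}_2)$ and $w=\zeta_\mathcal{T}(\mathbb{S}_3)$, and delete the leaf $\mathbb{S}_1$ to form $\mathcal{T}'=\mathcal{T}-\{\mathbb{S}_1\}$ of order $n-1$. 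The essential computation, and the feature distinguishing this argument from the $\mathcal{M}_1$ one, is the exact increment: deleting $\mathbb{S}_1$ removes the edge $\mathbb{S}_1\mathbb{S}_2$ (of weight $u$) and lowers the degree of $\mathbb{S}_2$ from $u$ to $u-1$, rescaling every remaining edge at $\mathbb{S}_2$, and a short calculation gives
\[
\mathcal{M}_2(\mathcal{T})=\mathcal{M}_2(\mathcal{T}')+2u+w-2 .
\]
Unlike the $\mathcal{M}_1$ recurrence $\mathcal{M}_1(\mathcal{T})=\mathcal{M}_1(\mathcal{T}')+2u$, this increment carries the neighbouring degree $w$, which is the quantity that must be controlled.

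Combining the recurrence with the inductive hypothesis $\mathcal{M}_2(\mathcal{T}')\geq 4(n-1)+\varepsilon(\mathcal{T}')-9$ and writing $\delta=\varepsilon(\mathcal{T})-\varepsilon(\mathcal{T}')$ yields
\[
\mathcal{M}_2(\mathcal{T})\geq g_{\min}(n,\varepsilon(\mathcal{T}))+\bigl(2u+w-6-\delta\bigr),
\]
so everything reduces to checking $2u+w-6-\delta\geq 0$ in each subcase. When $u=2$, deleting $\mathbb{S}_1$ merely turns $\mathbb{S}_2$ into a leaf and leaves the metric dimension unchanged ($\delta=0$), giving residual $w-2$. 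When $u=3$ the deletion can lower $\varepsilon$ by at most one ($\delta\leq 1$), since only the single leaf $\mathbb{S}_1$ is removed and only one further leaf changes its nearest major vertex, so the residual is at least $w-1$. When $u=v\geq 4$ the vertex $\mathbb{S}_2$ remains a major vertex and its terminal degree drops by exactly one, so $\delta=1$ and the residual is $2v+w-7$.

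The step I expect to be the main obstacle is the metric-dimension bookkeeping, and in particular the value of $\delta$ in the subcase $u=3$: contrary to what a naive reading suggests, deleting $\mathbb{S}_1$ may actually decrease $\varepsilon$ by one (for example when $\mathcal{T}'$ collapses to a path), so the clean claim ``$\varepsilon(\mathcal{T}')=\varepsilon(\mathcal{T})$'' must be replaced by the robust bound $\delta\leq 1$, which the residual still absorbs. To finish, one needs $w\geq 2$ whenever $u\in\{2,3\}$; this holds because $w=1$ would force $\mathbb{S}_3$ to be a leaf, i.e.\ $\operatorname{diam}(\mathcal{T})=2$, which makes $\mathcal{T}$ a star with $\Delta=n-1\geq 4$ for $n\geq 5$, a configuration belonging to the case $v\geq 4$ rather than to $u\in\{2,3\}$. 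With $w\geq 2$ in the first two subcases and $w\geq 1$ in the third, all three residuals are nonnegative, strictly positive except when $u=2$ and $w=2$; this closes the induction and identifies the path $\mathbb{P}_n$ as the extremal (minimizing) tree.
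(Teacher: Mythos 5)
Your proposal follows the same skeleton as the paper's proof: induction on $n$, equality checks for $\mathbb{P}_n$ and $\mathbb{S}_4$ in the base case and in the $\Delta=2$ case, then deletion of the leaf $\mathbb{S}_1$ on a diametrical path with subcases on $u=\zeta_\mathcal{T}(\mathbb{S}_2)$. The substantive difference is that your execution is more careful on the two points where the paper is actually shaky. First, the paper asserts the increment $\mathcal{M}_2(\mathcal{T})=\mathcal{M}_2(\mathcal{T}')+2u$, i.e.\ it reuses the $\mathcal{M}_1$ recurrence verbatim; the true increment is $2u+w-2$ with $w=\zeta_\mathcal{T}(\mathbb{S}_3)$, as you compute. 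Since $w\geq 2$ off the star case, the paper's claimed equality is at least a valid inequality in the direction needed for a lower bound, but your version is the correct identity and makes the argument transparent (and would be essential if one tried to adapt the same line to the upper bound of Theorem \ref{mainthm4}). Second, the paper flatly claims $\varepsilon(\mathcal{T}')=\varepsilon(\mathcal{T})$ when $u\in\{2,3\}$, which can fail for $u=3$ (e.g.\ when $\mathcal{T}'$ degenerates to a path); your replacement $\delta=\varepsilon(\mathcal{T})-\varepsilon(\mathcal{T}')\leq 1$ together with the residual $2u+w-6-\delta\geq 0$ absorbs this and closes the gap. Your justification that $w\geq 2$ whenever $u\in\{2,3\}$ (since $w=1$ forces diameter $2$, hence a star falling under the $v\geq 4$ subcase for $n\geq 5$) is the one detail neither stated in the paper nor entirely obvious, and it is correct. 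In short: same approach, but your write-up repairs two inaccuracies in the published argument.
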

	\begin{proof}
		Assume that $ g_{min}(n, \varepsilon(\mathcal{T})) = 4n - 9 + \varepsilon(\mathcal{T}) $. Induction on the order $n$ will be used to demonstrate the result. For the base case where $ n \geq 4 $, by applying Theorems \ref{preth1} and \ref{preth3}, we know that $ \mathcal{M}_{2}(\mathbb{P}_4) = 8 = g_{min}(4, 1) $. Similarly, by using Theorems \ref{preth2} and \ref{preth4}, we find that $ \mathcal{M}_{1}(\mathbb{S}_4) = 9 \geq g_{min}(4, 2) $.
		
		Assuming that the assertion is true for every tree of order $ n-1 $, our goal is to demonstrate it for a tree of order $ n $. The following scenarios will comprise the proof:
		
		\noindent \textbf{Case 1:} Assume $ \Delta(\mathcal{T}) = 2 $, which implies that $ \mathcal{T} \cong \mathbb{P}_n $. By Theorem \ref{preth1}, we calculate $ \mathcal{M}_{1}(\mathbb{P}_n) $, and by Theorem \ref{preth3}, we determine $ \varepsilon(\mathbb{P}_n) $:
		
		\[
		\begin{split}
			\mathcal{M}_{2}(\mathbb{P}_n) &= 4n - 8, \\
			&= 4n - 8 + \varepsilon(\mathbb{P}_n) - 1, \\
			&= 4n - 9 + \varepsilon(\mathbb{P}_n), \\
			&= g_{min}(n, \varepsilon(\mathcal{T})).
		\end{split}
		\]
		
		Thus, equality is achieved when $ \mathcal{T} \cong \mathbb{P}_n $. Therefore, we conclude that when $ \mathcal{T} \cong \mathbb{P}_n $, the value of $ \mathcal{M}_{2}(\mathcal{T}) $ reaches its minimum.
		
		\noindent \textbf{Case 2:} Let $ \mathbb{P}_{d+1} = \{\mathbb{S}_1, \mathbb{S}_2, \ldots, \mathbb{S}_{d+1}\} $ denote a diametrical path in the tree $\mathcal{T}$. We suppose that the greatest degree $ \Delta(\mathcal{T}) $ is at least 3, with the constraint that $ \zeta_\mathcal{T}(\mathbb{S}_2) \geq 2 $.
		
		\noindent \textbf{Subcase 2.1:} Suppose $ \zeta_\mathcal{T}(\mathbb{S}_2) = u $ where $ u \in \{2, 3\} $, and let the neighborhood of $ \mathbb{S}_2 $ be $ N(\mathbb{S}_2) = \{\mathbb{S}_1, \mathbb{S}_3, t_1, t_2\} $. If we construct the modified tree $ \mathcal{T}' = \mathcal{T} - \{\mathbb{S}_1\} $, then the metric dimension remains unchanged, i.e., $ \varepsilon(\mathcal{T}') = \varepsilon(\mathcal{T}) $. Consequently, we have:
		
		\[
		\begin{split}
			\mathcal{M}_{2}(\mathcal{T}) &= \mathcal{M}_{2}(\mathcal{T}') + 2u, \\
			&\geq 4(n-1) - 9 + \varepsilon(\mathcal{T}) + 2u, \\
			&= g_{\min}(n, \varepsilon(\mathcal{T})) + 2u - 4, \\
			&\geq g_{\min}(n, \varepsilon(\mathcal{T})).
		\end{split}
		\]
		
		\noindent \textbf{Subcase 2.2:} Now, assume $ \zeta_\mathcal{T}(\mathbb{S}_2) = v $ where $ v \geq 4 $, with the neighborhood given by $ N(\mathbb{S}_2) = \{\mathbb{S}_1, \mathbb{S}_3, t_1, \ldots, t_{v-2}\} $. In this case, if we define $ \mathcal{T}' = \mathcal{T} - \{\mathbb{S}_1\} $, the metric dimension decreases, resulting in $ \varepsilon(\mathcal{T}') = \varepsilon(\mathcal{T}) - 1 $. Thus, we can write:
		
		\[
		\begin{split}
			\mathcal{M}_{2}(\mathcal{T}) &= \mathcal{M}_{2}(\mathcal{T}') + 2v, \\
			&\geq 4(n-1) - 9 + \varepsilon(\mathcal{T}) - 1 + 2v, \\
			&= g_{\min}(n, \varepsilon(\mathcal{T})) + 2v - 5, \\
			&> g_{\min}(n, \varepsilon(\mathcal{T})).
		\end{split}
		\]
		
		Therefore, the theorem is established.

	\end{proof}
	
	\begin{theorem}\textnormal{\label{mainthm4}
			Let $\mathcal{T}$ be a tree with order $ n $ and metric dimension $ \varepsilon(\mathcal{T}) $. Then
			$$ \mathcal{M}_{2}(\mathcal{T}) \leq n^2 - 3n + \varepsilon(\mathcal{T})+ 3 . $$}
	\end{theorem}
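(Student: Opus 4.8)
The plan is to follow the inductive template of Theorems~\ref{mainthm1}--\ref{mainthm3}. Put $g_{\max}(n,\varepsilon(\mathcal{T})) = n^2 - 3n + \varepsilon(\mathcal{T}) + 3$ and prove $\mathcal{M}_2(\mathcal{T}) \le g_{\max}(n,\varepsilon(\mathcal{T}))$ by induction on $n$. First I would check that equality is forced at the star: by Theorems~\ref{preth2} and~\ref{preth4}, $\mathcal{M}_2(\mathbb{S}_n) = (n-1)^2$ and $\varepsilon(\mathbb{S}_n) = n-2$, so $g_{\max}(n,n-2) = n^2 - 3n + (n-2) + 3 = (n-1)^2$. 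For the base case $n=4$ I would check the only two trees, obtaining $\mathcal{M}_2(\mathbb{P}_4)=8=g_{\max}(4,1)$ and $\mathcal{M}_2(\mathbb{S}_4)=9=g_{\max}(4,2)$. In the inductive step, Case~1 ($\Delta=n-1$, so $\mathcal{T}\cong\mathbb{S}_n$) merely reproduces the equality above, so the content is Case~2: every non-star tree, equivalently $\Delta\le n-2$. This single case also absorbs the path, since for $n\ge 4$ a tree has diameter $2$ only if it is a star.

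For Case~2 I would fix a diametrical path $\mathbb{S}_1,\mathbb{S}_2,\dots,\mathbb{S}_{d+1}$; as $\mathcal{T}$ is not a star we have $d\ge 3$, hence $\mathbb{S}_3$ is not a leaf and $a:=\zeta_{\mathcal{T}}(\mathbb{S}_3)\ge 2$. Deleting the leaf $\mathbb{S}_1$ yields a tree $\mathcal{T}'$ of order $n-1$. The essential difference from the first Zagreb index is that $\mathcal{M}_2$ is edge-weighted: removing $\mathbb{S}_1$ drops $u:=\zeta_{\mathcal{T}}(\mathbb{S}_2)$ to $u-1$, which rescales the weight of the surviving edge $\mathbb{S}_2\mathbb{S}_3$ together with the $u-2$ pendant edges at $\mathbb{S}_2$. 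An edge-by-edge accounting then gives the exact increment
\[
\mathcal{M}_2(\mathcal{T}) - \mathcal{M}_2(\mathcal{T}') = a + 2u - 2,
\]
rather than the value $2u$ that holds for $\mathcal{M}_1$.

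To close the induction I would combine three ingredients. First, a counting bound on $a$: the $u$ vertices $\mathbb{S}_2,\mathbb{S}_1,t_1,\dots,t_{u-2}$ hanging at $\mathbb{S}_2$ are disjoint from $\mathbb{S}_3$ and its remaining $a-1$ neighbours, so $n\ge u+a$, that is $a\le n-u$. Second, the hypothesis $u\le\Delta\le n-2$. Together these yield
\[
\mathcal{M}_2(\mathcal{T}) - \mathcal{M}_2(\mathcal{T}') = a + 2u - 2 \le (n-u)+2u-2 = n+u-2 \le 2n-4.
\]
Third, monotonicity of the metric dimension under leaf deletion, $\varepsilon(\mathcal{T}')\le\varepsilon(\mathcal{T})$ (deleting $\mathbb{S}_1$ leaves $\varepsilon$ fixed or lowers it by exactly one). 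Since $g_{\max}(n,\varepsilon(\mathcal{T})) - g_{\max}(n-1,\varepsilon(\mathcal{T}')) = 2n-4+\bigl(\varepsilon(\mathcal{T})-\varepsilon(\mathcal{T}')\bigr)\ge 2n-4$, the inductive hypothesis $\mathcal{M}_2(\mathcal{T}')\le g_{\max}(n-1,\varepsilon(\mathcal{T}'))$ gives $\mathcal{M}_2(\mathcal{T})\le g_{\max}(n-1,\varepsilon(\mathcal{T}'))+(2n-4)\le g_{\max}(n,\varepsilon(\mathcal{T}))$, completing the step.

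The main obstacle I anticipate is the increment itself. Because $\mathcal{M}_2$ weights each edge by the product of its endpoint degrees, deleting $\mathbb{S}_1$ perturbs the weight of the surviving edge $\mathbb{S}_2\mathbb{S}_3$ and so introduces the term $a=\zeta_{\mathcal{T}}(\mathbb{S}_3)$, which has no analogue in the $\mathcal{M}_1$ computation; taming $a$ through the bound $a\le n-u$ is precisely what keeps the increment below $2n-4$. A useful feature to exploit is that the argument never needs the exact change in $\varepsilon$ under the deletion---only $\varepsilon(\mathcal{T}')\le\varepsilon(\mathcal{T})$---so it is insensitive to whether $\zeta_{\mathcal{T}}(\mathbb{S}_2)\in\{2,3\}$ or $\zeta_{\mathcal{T}}(\mathbb{S}_2)\ge 4$. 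The remaining delicate point is the strictness analysis needed to pin down the extremal tree: one must track when the three inequalities are simultaneously tight---forcing $a=2$ and $u=n-2$ together with $\mathcal{T}'$ extremal of order $n-1$---and check that for $n\ge 5$ this is compatible only with $\mathcal{T}\cong\mathbb{S}_n$.
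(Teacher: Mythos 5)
Your proof is correct and follows the same inductive skeleton as the paper---delete the leaf $\mathbb{S}_1$ of a diametrical path, apply the hypothesis to $\mathcal{T}'=\mathcal{T}-\{\mathbb{S}_1\}$, and compare the increment of $\mathcal{M}_2$ with the increment of $g_{\max}$---but your treatment of the increment is genuinely different, and it is the sound one. The paper transplants the first-Zagreb increment verbatim and writes $\mathcal{M}_{2}(\mathcal{T})=\mathcal{M}_{2}(\mathcal{T}')+2u$; since $\mathcal{M}_2$ weights each edge by the product of its endpoint degrees, lowering $\zeta_\mathcal{T}(\mathbb{S}_2)$ from $u$ to $u-1$ also rescales the surviving edge $\mathbb{S}_2\mathbb{S}_3$ and the $u-2$ pendant edges at $\mathbb{S}_2$, so the exact increment is $a+2u-2$ with $a=\zeta_\mathcal{T}(\mathbb{S}_3)$, exactly as you compute; the two agree only when $a=2$. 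Your counting bound $n\ge u+a$ (valid because $\mathcal{T}$ is acyclic, so the closed neighbourhood pieces at $\mathbb{S}_2$ and $\mathbb{S}_3$ are disjoint) together with $u\le\Delta\le n-2$ is precisely what absorbs the extra term $a$, and your observation that only $\varepsilon(\mathcal{T}')\le\varepsilon(\mathcal{T})$ is needed lets you merge the paper's two subcases $u\in\{2,3\}$ and $u\ge 4$ into one; by contrast, the paper's displayed chains in Cases 2.1 and 2.2 start from an incorrect equality and do not literally establish the bound when $a\ge 3$. Two points are worth making explicit in a final write-up: (i) a justification that deleting the leaf $\mathbb{S}_1$ cannot increase the metric dimension (e.g.\ via the standard expression of $\varepsilon(\mathcal{T})$ for a non-path tree as the number of leaves minus the number of exterior major vertices), since metric dimension is not subgraph-monotone for general graphs and the paper also asserts this behaviour without proof; and (ii) your base-case computation $\mathcal{M}_2(\mathbb{P}_4)=8=g_{\max}(4,1)$ is correct (the paper's claim of strict inequality there is not), which shows that at $n=4$ the upper bound is attained by the path as well as the star, a fact relevant to your concluding strictness analysis.
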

	\begin{proof}
		Assume that $ g_{\max}(n, \varepsilon(\mathcal{T})) = n^2 - 3n + \varepsilon(\mathcal{T})+ 3 $. We will demonstrate this result using mathematical induction based on the order $ n $. For $ n \geq 4 $, by applying Theorems \ref{preth2} and \ref{preth4}, we have $ \mathcal{M}_{2}(\mathbb{P}_4) = 8 < g_{\max}(4, 1) $ and $ \mathcal{M}_{2}(\mathbb{S}_4) = 9 = g_{\max}(4, 2) $. 
		
		Assuming that the statement holds true for any $\mathcal{T}$ with order $ n-1 $, we will now establish it for a $\mathcal{T}$ with order $ n $. In the next two cases, Theorem \ref{mainthm4} will be proved:
		
		\noindent \textbf{Case 1:} Suppose $ \Delta(\mathcal{T}) = n-1 $, which indicates that $ \mathcal{T} \cong \mathbb{S}_n $. By utilizing Theorem \ref{preth2}, we find $ \mathcal{M}_{2}(\mathbb{S}_n) $, and with Theorem \ref{preth4}, we determine $ \varepsilon(\mathbb{S}_n) $:
		
		\[
		\begin{split}
			\mathcal{M}_{2}(\mathbb{S}_n) &= (n-1)^{2}, \\
			&= (n-1)^{2} + \varepsilon(\mathbb{S}_n) - (n-2), \\
			&= n^2 - 2n + 1 + \varepsilon(\mathbb{S}_n)-n+2, \\
			&= n^2 - 3n + \varepsilon(\mathbb{S}_n)+ 3 , \\
			&= g_{\max}(n, \varepsilon(\mathbb{S}_n)).
		\end{split}
		\]
		
		Consequently, equality holds when $ \mathcal{T} \cong \mathbb{S}_n $. Therefore, we conclude that if $ \mathcal{T} \cong \mathbb{S}_n $, then $ \mathcal{M}_{2}(\mathcal{T}) $ attains its maximum value.
		
		\noindent \textbf{Case 2:} Let $ \mathbb{P}_{d+1} = \{\mathbb{S}_1, \mathbb{S}_2, \dots, \mathbb{S}_{d+1}\} $ denote a diametrical path within the tree $\mathcal{T}$, where $ \Delta(\mathcal{T}) \geq 3 $. Furthermore, assume $ \zeta_\mathcal{T}(\mathbb{S}_2) \geq 2 $.
		
		\noindent \textbf{Case 2.1:} Assume $ \zeta_\mathcal{T}(\mathbb{S}_2) = u $, where $ u \in \{2, 3\} $, and the order of tree is at least 5. The neighborhood of $ \mathbb{S}_2 $ can be represented as $ N(\mathbb{S}_2) = \{\mathbb{S}_1, \mathbb{S}_3, t_1, t_2\} $. By defining the modified tree $ \mathcal{T}' = \mathcal{T} - \{\mathbb{S}_1\} $, we observe that the value of $ \varepsilon(\mathcal{T}) $ remains unchanged, such that $ \varepsilon(\mathcal{T}') = \varepsilon(\mathcal{T}) $. Hence, we can express:
		
		\[
		\begin{aligned}
			\mathcal{M}_{2}(\mathcal{T}) &= \mathcal{M}_{2}(\mathcal{T}') + 2u, \\
			&\leq  (n-1)^2 - 3(n-1) + \varepsilon(\mathcal{T})+ 3 + 2u, \\
			&= g_{\max}(n, \varepsilon(\mathcal{T})) - 2n + 2u + 4, \\
			&\leq g_{\max}(n, \varepsilon(\mathcal{T})).
		\end{aligned}
		\]
		
		\noindent \textbf{Case 2.2:} Now consider $ \zeta_\mathcal{T}(\mathbb{S}_2) = v \geq 4 $, with the neighborhood of $ \mathbb{S}_2 $ defined as $ N(\mathbb{S}_2) = \{\mathbb{S}_1, \mathbb{S}_3, t_1, \ldots, t_{v-2}\} $. When we modify the tree to $ \mathcal{T}' = \mathcal{T} - \{\mathbb{S}_1\} $, the value of $ \varepsilon(\mathcal{T}) $ decreases, leading to $ \varepsilon(\mathcal{T}') = \varepsilon(\mathcal{T}) - 1 $. Thus, we obtain:
		
		\[
		\begin{aligned}
			\mathcal{M}_{2}(\mathcal{T}) &= \mathcal{M}_{2}(\mathcal{T}') + 2v, \\
			&\leq n^2 - 3n + 3 + \varepsilon(\mathcal{T}) - 2n + 3 + 2v, \\
			&= g_{\max}(n, \varepsilon(\mathcal{T})) - 2n + 2v + 3, \\
			&< g_{\max}(n, \varepsilon(\mathcal{T})).
		\end{aligned}
		\]
		
		Therefore, we conclude that the proof is complete.

	\end{proof}
	\section{Conclusion}
	In this paper, we investigated the influence of metric dimension on three key topological indices--the first Zagreb index, the second Zagreb index, and the $ABC$ index--within the class of trees. We established sharp upper and lower bounds for the Zagreb indices and derived an upper bound for the $ABC$ index, all in terms of the order and metric dimension of the trees. Our results revealed that path graphs attain the minimum Zagreb indices, while star graphs maximize both the Zagreb and $ABC$ indices, highlighting their extremal nature with respect to these measures.
	
	These findings provide deeper insights into how structural parameters, particularly metric dimension, affect the behavior of degree-based indices in tree graphs. Such insights have meaningful implications in chemical graph theory, where these indices serve as molecular descriptors in QSPR and QSAR models. The study contributes to the theoretical understanding of topological indices and lays the groundwork for future research into more complex graph families and the role of other structural parameters in influencing topological descriptors.

	\subsection*{Declarations}\noindent\textbf{Author Contribution Statement} All authors contributed equally to the paper.\\
	
	\noindent\textbf{Declaration of competing interest} The authors have no conflict of interest to disclose.\\
	
	\noindent\textbf{Data availability statements} All the data used to find the results is included in the manuscript.\\
	
	\vspace{0.5cm}
	\noindent\textbf{Acknowledgment} This research was supported by the Ministry of Higher Education (MOHE) through the Fundamental Research Grant Scheme (FRGS/1/2022/STG06/UMT/03/4).\\
	
	\noindent\textbf{Ethical statement} This article contains no studies with humans or animals.

\bibliographystyle{plain}
\bibliography{manuscript}

\end{document}